\newtheorem{theorem}{Theorem}[section]
\newtheorem{lemma}[theorem]{Lemma}
\theoremstyle{definition}
\newtheorem{definition}[theorem]{Definition}
\newtheorem{proposition}[theorem]{Proposition}
\theoremstyle{remark}
\numberwithin{equation}{section}
\def\intslash{\rlap{\kern  .32em $\mspace {.5mu}\backslash$ }\int}
\def\qsl{{\rlap{\kern  .32em $\mspace {.5mu}\backslash$ }\int_{Q_x}}}
\def\R{{\mathbb R}}
\def\Z{{\mathbb Z}}
\def\C{{\mathcal C}}
\def\K{\mathcal{K}}
\def\diam{{\text{\it  diam}}}
\def\pari{\partial}
\def\ga{\gamma}
\def\dist{{\text{\it dist}}}
\def\supp{{\text{\rm supp}}}
\def\inn#1#2{\langle#1,#2\rangle}
\def\rta{\rightarrow}
\def\card{\text{\rm card}}
\def\lc{\lesssim}
\def\pv{\text{\rm p.v.}}
\def\alp{\alpha}
\def\del{\delta}             
\def\eps{\varepsilon}
\def\tet{\theta}
\def\lam{\lambda}             \def\Lam{\Lambda}
\def\si{\sigma}              
\def\om{\omega}              \def\Om{\Omega}
\def\fr{\frac}
\newcommand{\Be}{\begin{equation}}
\newcommand{\Ee}{\end{equation}}
\newcommand{\Bes}{\begin{equation*}}
\newcommand{\Ees}{\end{equation*}}
\newcommand{\Bsp}{\begin{split}}
\newcommand{\Esp}{\end{split}}
\newcommand{\Bm}{\begin{multline}}
\newcommand{\Em}{\end{multline}}
\newcommand{\Bea}{\begin{eqnarray}}
\newcommand{\Eea}{\end{eqnarray}}
\newcommand{\Beas}{\begin{eqnarray*}}
\newcommand{\Eeas}{\end{eqnarray*}}
\newcommand{\Benu}{\begin{enumerate}}
\newcommand{\Eenu}{\end{enumerate}}
\newcommand{\Bi}{\begin{itemize}}
\newcommand{\Ei}{\end{itemize}}
\begin{document}
\title[Weak $(1,1)$ estimate for maximal truncated operator]{Weak $(1,1)$ estimate for maximal truncated rough singular integral operator}

\author[]{Xudong Lai}
\address{Xudong Lai:
Institute for Advanced Study in Mathematics\\
Harbin Institute of Technology\\
Harbin
150001\\
China;
Zhengzhou Research Institute\\
Harbin Institute of Technology\\
Zhengzhou
450000\\
China}

\email{xudonglai@hit.edu.cn}
\thanks{This work is supported by National Natural Science Foundation of China (No. 12322107, No. 12271124 and No. W2441002) and Heilongjiang Provincial Natural Science Foundation of China (No. YQ2022A005).}
\subjclass[2010]{Primary  42B20; Secondary 42B25}
\keywords{Maximal operator, singular integral operator, rough kernel, weak $(1,1)$}

\begin{abstract}
In their seminal work (Amer. J. Math.
  78: 289-309, 1956),  Calder\'on and Zygmund introduced  the maximal truncated rough singular integral operator and established its $L^p$-boundedness for $1 < p < \infty$. However, the endpoint case $p = 1$ remained an open problem. This paper resolves this problem. More precisely, we prove that the maximal truncated rough singular integral operator is of  weak type $(1,1)$.
\end{abstract}

\maketitle

\section{Introduction}
\vskip0.24cm
Let $\Omega$ be a homogeneous function of degree zero on $\mathbb{R}^d \setminus \{0\}$, meaning $\Omega(\theta) = \Omega(r\theta)$ for all $\theta \in S^{d-1}$ and $r > 0$. Assume that $\Omega$ is integrable on the sphere ($\Omega \in L^1(S^{d-1})$) and satisfies the cancellation condition
\begin{align}\label{e:27canca}
 \int_{S^{d-1}} \Omega(\theta) d\sigma(\theta) = 0,
\end{align}
where $d\sigma(\theta)$ denotes the surface measure on $S^{d-1}$.

In their seminal work \cite{CZ56}, Calder\'on and Zygmund introduced the maximal truncated singular integral operator with rough kernel $\Omega$:
\begin{align*}
   T_{\Omega,*} f(x) = \sup_{\epsilon > 0} \Big| \int_{|x-y| > \epsilon} \frac{\Omega(x-y)}{|x-y|^d} f(y) dy \Big|.
\end{align*}
Using the method of rotations, they established the $L^p$-boundedness of $T_{\Omega,*}$ for $1 < p < \infty$ when either $\Omega$ is odd and $\Omega \in L^1(S^{d-1})$, or $\Omega$ is even and $\Omega \in L\log L(S^{d-1})$.

However, the weak type $(1,1)$ boundedness of $T_{\Om,*}$ has remained an open problem since then, even for $\Omega \in L^\infty(S^{d-1})$. This longstanding problem has been highlighted recently  several times, see for example by Seeger \cite{See14} in 2014.

In this paper, we resolve this open problem. Our main result can be stated  as  follows.
\begin{theorem}\label{t:27main}
  Suppose $d\geq2$, $\Omega$ is a homogeneous function of degree zero on $\mathbb{R}^d\backslash \{0\}$ satisfying \eqref{e:27canca} and $\Omega \in L\log L(S^{d-1})$.  Then the operator $T_{\Omega,\ast}$ is of weak type $(1, 1)$, i.e. for each $f \in L^1(\mathbb{R}^d)$ and each $\lambda >0$, the following estimate holds
\begin{align*}
  \lambda |\{x\in \mathbb{R}^d: T_{\Omega,\ast} f(x)>\lambda\}| \lesssim \C_\Omega \|f\|_{L^1(\mathbb{R}^d)},
\end{align*}
where $\C_\Omega$ is a finite constant (see its definition in \eqref{e:27ccon}).
\end{theorem}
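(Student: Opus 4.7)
The plan is to extend Seeger's weak-$(1,1)$ argument for the non-maximal rough singular integral $T_\Omega$ so that it tolerates a pointwise supremum over truncation scales. My first step is a standard Calder\'on--Zygmund decomposition of $f$ at level $\lambda$: write $f = g + b$ with $b = \sum_{Q} b_Q$, where $Q$ ranges over the Whitney cubes of the CZ exceptional set, each $b_Q$ is supported in $Q$, has mean zero, and satisfies $\|b_Q\|_1 \lesssim \lambda |Q|$. The good part $g$ is handled by the known $L^2$ bound for $T_{\Omega,\ast}$ (valid under $\Omega \in L\log L(S^{d-1})$), reducing matters to estimating $|\{x \notin E^\ast : T_{\Omega,\ast} b(x) > \lambda\}|$ where $E^\ast$ is a suitable enlargement of the exceptional set.

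Next, I linearize the supremum: by measurable selection, choose $\epsilon(x) > 0$ with $T_{\Omega,\ast} b(x) \leq 2 \bigl|\int_{|y|>\epsilon(x)} K(y) b(x-y)\,dy\bigr|$, and discretize to $\epsilon(x) = 2^{k(x)}$ with $k(x) \in \Z$. I then decompose the kernel dyadically, $K = \sum_{j\in\Z} K_j$ with $K_j$ supported in the annulus $\{2^j < |x| \leq 2^{j+1}\}$, so that
\[
 T_{\epsilon(x)} b(x) = \sum_{j \geq k(x)} \sum_Q K_j \ast b_Q(x).
\]
On top of this, I apply Seeger's microlocal decomposition of $\Omega$ in spherical frequency, producing pieces $K_j^{\nu}$ essentially concentrated at frequency $2^{\nu - j}$; the weight $\|\Omega\|_{L\log L}$ appears as a summable weight in $\nu$, which is how the $L\log L$ assumption enters the final constant $\C_\Omega$.

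For each CZ cube $Q$ of side length $\sim 2^{s_Q}$, I split the $j$-sum into three regimes. The regime $j < s_Q$ contributes nothing outside $2Q$ by the support of $b_Q$. The far regime $j \geq s_Q + C\nu$ is handled by combining the mean-zero condition on $b_Q$ with the H\"older regularity of $K_j^\nu$ at scale $2^{j-\nu}$, producing geometric decay in $j - s_Q - C\nu$ that sums against the $\nu$-weights. The critical intermediate band $s_Q \leq j < s_Q + C\nu$ is the delicate case: one must exploit almost-orthogonality of the frequency-localized pieces across different cubes via an $L^2$ estimate on shifted sets that avoid $E^\ast$.

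The principal obstacle is integrating the supremum in $\epsilon(x)$ with Seeger's frequency analysis in the critical band. My plan is to stratify by the level sets $\{x : k(x) = m\}$ and, on each stratum, reduce to an $L^2$ bound for the restricted tail $\mathbf{1}_{\{k=m\}} \sum_{j \geq m} K_j^\nu \ast b_Q$. Controlling these partial-sum operators uniformly in $m$ is precisely the feature absent from the non-maximal problem, and I anticipate that the decisive ingredient will be either a Rademacher--Menshov-type comparison of the partial-sum maximal function with a square function $(\sum_j |K_j^\nu \ast b|^2)^{1/2}$ — the logarithmic loss from which must be reabsorbed by the $\nu$-decay — or, preferably, an almost-orthogonality lemma that treats the tail maximal operator directly and avoids any logarithmic loss. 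Successfully balancing the $\nu$-sum (governed by $\|\Omega\|_{L\log L}$) against whatever loss enters through the maximal comparison is, in my view, the heart of the new argument and the step at which all previous attempts must have stalled.
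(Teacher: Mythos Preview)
Your proposal correctly identifies the two main ingredients --- Seeger's microlocal decomposition and a Rademacher--Menshov-type control of the partial-sum maximal function --- but it is missing the structural idea that makes Rademacher--Menshov applicable with only a tolerable loss, and this is precisely the heart of the paper's argument.

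The gap is in your linearization. Stratifying by $\{x:k(x)=m\}$ does not reduce the difficulty: on each stratum you still face an $L^2$ bound that must be \emph{summed} over infinitely many $m$, and there is no mechanism to make that sum converge. Likewise, applying Rademacher--Menshov directly to the tail sums $\sum_{j\ge l} K_j\ast b$ gives a factor $\log N$ where $N$ is the number of scales $j$ involved --- but a priori that is infinite, and even after localizing to a single $s$ (gap between cube scale and kernel scale) there is no bound on the number of relevant $j$'s. Your hope that the ``$\nu$-decay'' absorbs the logarithmic loss is not enough: you need the loss to be at most polynomial in $s$, and you have not explained why it should be.

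The paper resolves this as follows. First, it replaces the truncations by a physical-space decomposition: for each dyadic cube $K$ of side $2^k$, set $T_K g = \mathcal K_k \ast (g\chi_{\frac12 K})$, so that $\supp(T_K g)\subseteq K$. Then for fixed $x$ the sum $\sum_{l(K)\ge 2^l} T_K b_{k-s}(x)$ runs only over the nested chain of dyadic cubes containing $x$, so the supremum in $l$ is a supremum over partial sums of that chain. Second --- and this is the new idea --- the paper builds an iterative family of sets $F_s^n$ and collections $\mathcal I_s^{\#,n}$ so that within each $\mathcal I_s^{\#,n}$ every cube has at most $C_0 2^{ds}$ ancestors, while $|F_s^{n-1}|$ decays geometrically in $n$. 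Rademacher--Menshov then applies with $N=C_0 2^{ds}$, costing only a factor $\sim s$, which is absorbed by the exponential gain $2^{-\delta s}$ from the microlocal analysis; the geometric decay in $n$ handles the sum over $n$. A further point you did not anticipate: for the $(I-G^s_v)$ piece the paper does not use Seeger's $L^2$ argument directly but instead proves an $L^3$ bound with trivial growth in $s$ and interpolates it against the $L^1$ bound with decay --- this interpolation, with a carefully chosen extra parameter $\gamma$ in the microlocal decomposition, is what produces a net $2^{-\delta s}$ on the $L^2$ side.
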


The challenges in resolving this open problem arise from two fundamental aspects: the roughness of the kernel and the maximal nature of $T_{\Omega,*}$. Even for linear rough operator, establishing its weak type $(1,1)$ boundedness remains highly non-trivial.
The study of boundedness for linear rough operator originated with Calder\'on and Zygmund \cite{CZ56} in 1956, who introduced the singular integral operator with homogeneous kernel $\Omega$
\begin{align*}
T_\Om f(x) = \pv\int \frac{\Omega(x-y)}{|x-y|^d}f(y)dy
\end{align*}
and established its $L^p$-boundedness for $1<p<\infty$ under the assumption either $\Omega$ is odd and $\Om\in L^1(S^{d-1})$ or $\Omega$ is even and $\Om\in L\log L(S^{d-1})$.
It was until in 1988 that Christ and Rubio de Francia \cite{CR88} obtained its weak type $(1,1)$ boundedness if $\Om\in L\log L(S^{1})$ in the two-dimensional case (independent by Hofmann \cite{Hof88} with $\Om\in L^q(S^{1})$ for $1<q\leq\infty$). Both these two works were motivated by Christ's previous $TT^*$ method in \cite{Chr88}. Later in 1996 Seeger \cite{See96} utilized the microlocal decomposition approach to establish its weak type $(1,1)$ boundedness for all dimensions if $\Om\in L\log L(S^{d-1})$.  In 1999,  Tao \cite{Tao1999} extended the $TT^*$ method  and obtained the weak type $(1,1)$ boundedness for rough singular integral operator on homogenous group.

Our main effort in this paper is devoted to linearizing the maximal operator. To this end, we make a standard  Calder\'on-Zygmund decomposition, then it suffices to deal with estimates for bad functions. To obtain some necessary decay estimates related to these bad functions,  we must linearize the maximal operator. Various techniques exist for linearizing maximal operators.
Our strategy is to use the Rademacher-Menshov theorem (see Lemma \ref{l:l4}). Preliminarily we will make physical dyadic decompositions for both the kernel $\frac{\Omega(x)}{|x|^d}$ and the function $f$ as follows
$$T_J f(x)=\int_{\mathbb{R}^d} \mathcal{K}_j (x-y)(f\chi_{\fr12J})(y)dy$$
so that $\supp(T_Jf)\subseteq J$,  here $J$ is a dyadic cube.
Hence for a fixed $x$, the supremum in $T_{*}f(x)$ (see \eqref{e:27MT} for its definition) should be taken as a sum over  dyadic cubes containing the point $x$, which form a natural net  around this point.
By employing an iteration technique, we will construct a suitable partition of the dyadic cubes across the whole space. This partition allows each decomposition of the dyadic cubes to be reorganized into a natural net (as analyzed above) over which the maximal operator can be linearized by the Rademacher-Menshov theorem.
After linearization, we conduct a meticulous analysis of the relationship between the dyadic cubes from the physical space and those originating from the Calder\'on-Zygmund decomposition.  This analysis, together with some decay estimates for the linearized operator,  will  yield the required bounds for bad functions.

\subsection*{Outline of the paper} In Section \ref{s:272}, by carrying out a reduction, the dyadic decomposition and the Calder\'on-Zygmund decomposition, the proof of our main theorem is reduced to establishing a key decay estimate for bad functions (see Proposition \ref{l:l3}).
In Section~\ref{s:273} we mainly present the linearization of the maximal operator and derive the required decay estimate for these bad functions based on some estimates for the linearized operator (see claim \eqref{e:e11}).
Finally the proof of estimates for the linearized operator is rather lengthy.
We will decompose the linearized operator into four operators and reduce the overall problem to establishing four key lemmas (see Lemma \ref{l:27finall21}, Lemma \ref{l:27finall22}, Lemma \ref{l:27finall23} and Lemma \ref{l:27finall24}). Section \ref{s:274} is devoted to the proofs of Lemma \ref{l:27finall21}, Lemma \ref{l:27finall22} and Lemma \ref{l:27finall23}. We will  prove Lemma \ref{l:27finall24} separately in Section \ref{s:275}.

\subsection*{Notation} Throughout this paper, we only consider the dimension $d\geq 2$ case and  $C$ stands for a positive finite constant which is independent of the essential variables, not necessarily the same one in each occurrence. $A\lc B$ means $A\leq CB$ for some constant $C$. By the notation $C_\eps$  we mean that the constant depends on the parameter $\eps$. $A\approx B$ means that $A\lc B$ and $B\lc A$.
$\Z$ denotes the set of all integers and $\Z^d=\Z\times \cdots\times \Z$ with $d$-tuple products. $\Z_+$ stands  for the set of all nonnegative integers.
For any measurable set $E\subseteq\R^d$, we denote by $|E|$ the Lebesgue measure of $E$.
$\chi_E$ represents the characteristic  function of $E$.
For any cube $Q$ and $s>0$, let $l(Q)$ be the sidelength of $Q$ and $sQ$ be the cube with the same center as $Q$ and $l(sQ)=sl(Q)$.
Define $\langle f \rangle_{Q}=\frac{1}{|Q|}\int_Q |f(x)| dx.$
For any dyadic cube $I$, $J$ and $K$, we set their sidelengths as $2^i$, $2^j$ and $2^k$ respectively. Define $B(x,r)$ as a ball with center $x$ and radius $r$.
For any $1\leq r\leq\infty$, set $r'$ as the dual number, i.e. $\fr{1}{r}+\fr{1}{r'}=1$. Denote $dist(E,F)=\inf\{|x-y|:x\in E,y\in F\}$ and $diam(E)=\sup\{|x-y|:x,y\in E\}$ for $E,F\subseteq \R^d$. We define $\|\Om\|_{\infty}\triangleq\|\Om\|_{L^\infty(S^{d-1})}$, $\|\Om\|_q\triangleq\big(\int_{S^{d-1}}|\Om(\tet)|^qd\sigma(\tet)\big)^{\fr{1}{q}}$ and $\|\Om\|_{L\log^+L(S^{d-1})}\triangleq\int_{S^{d-1}}|\Om(\tet)|\log(2+|\Om(\tet)|)d\sigma(\tet).$
Denote by $\mathcal{F}f$ (or $\hat{f}$) and $\mathcal{F}^{-1}f$ (or $\check{f}$)  the Fourier transform and the inversion Fourier transform
of $f$  which are given by
$$\mathcal{F}f(\xi)=\int_{\R^d} e^{-i\inn{x}{\xi}}f(x)dx,\ \ \ \ \mathcal{F}^{-1}f(\xi)=\fr{1}{(2\pi)^{d}}\int_{\R^d}e^{i\inn{x}{\xi}}{f(x)dx}.$$

\vskip0.24cm
\section{Proof of Theorem \ref{t:27main}: some basic estimates}\label{s:272}
\vskip0.24cm

In this section, we give some  basic and standard estimates for  weak $(1,1)$ boundedness of the maximal truncated operator $T_{\Omega,*}$. We will apply the Calder\'on-Zygmund decomposition and reduce our proof to estimates for bad functions.

\vskip0.24cm

\subsection{A reduction}
We first reduce the study of $T_{\Omega,*}$ to a maximal dyadic truncated operator.
Let $\varphi$ be a smooth function on $\mathbb{R}^d$ which is supported in the annulus $\{2^{-4} < |x| < 2^{-2}\}$ and satisfies the partition of unity condition
\begin{equation*}
\sum_{j\in \mathbb{Z}}\varphi_j(x) = 1 \quad \text{for all } x \in \mathbb{R}^d \setminus \{0\},
\end{equation*}
where $\varphi_j(x) = \varphi(2^{-j}x)$. We define the associated dyadic operator
\begin{align*}
T_j f(x) = \int_{\mathbb{R}^d} \mathcal{K}_j(x-y)f(y) dy
\end{align*}
with the kernel $\mathcal{K}_j(x) = \varphi_j(x)\frac{\Omega(x)}{|x|^d}$.

A straightforward estimation yields the pointwise control: for $x \in \mathbb{R}^d$,
\begin{align}\label{e:27point1}
T_{\Omega,*} f(x) \lesssim M_{\Omega}f(x) + T_*f(x)
\end{align}
and
\begin{align}\label{e:27point2}
T_*f(x)\lesssim M_{\Omega}f(x) + T_{\Omega,*} f(x)
\end{align}
where $M_\Omega$ and $T_*$ are defined by
\Be\label{e:27MT}
M_\Omega f(x)=\sup_{r>0}\fr{1}{r^d}\int_{B(x,r)}|\Omega(x-y)f(y)|dy; \quad T_*f(x)=\sup_{l\in \mathbb{Z}} \Big|\sum_{j\geq l}T_j f(x)\Big|.
\Ee

Notice that $T_*$ is of strong type $(p,p)$ for $1<p<\infty$ which is a consequence of the pointwise estimate \eqref{e:27point2}
together with the fact $M_\Omega$ is  $L^p$ bounded if $\Omega\in L^1(S^{d-1})$ and $T_{\Omega,*}$ is  $L^p$ bounded if $\Omega\in L\log L(S^{d-1})$ (see e.g. \cite{CZ56} or \cite{Gra14C}). On the other hand,
it is  known that $M_\Omega$ is of weak type $(1,1)$ if $\Omega\in L\log L(S^{d-1})$ (see \cite{CR88}).
Hence by the pointwise estimate \eqref{e:27point1}, to prove $T_{\Omega,*}$ is of weak type $(1,1)$, it suffices to show that  $T_*$ is of weak type $(1,1)$ which we  restate  as follows.

\begin{theorem}\label{t:27maind}
Suppose $\Omega$ is a homogeneous function of degree zero on $\mathbb{R}^d\backslash \{0\}$ satisfying \eqref{e:27canca} and $\Omega \in L\log L(S^{d-1})$. Then for any $f\in L^1(\mathbb{R}^d)$ and $\lambda>0$, the following estimate holds
\begin{align*}
  \lambda |\{x\in \mathbb{R}^d: T_{\ast} f(x)>\lambda\}| \lesssim \C_\Omega \|f\|_{L^1(\mathbb{R}^d)}
\end{align*}
where $\C_\Omega$ is a finite constant.
\end{theorem}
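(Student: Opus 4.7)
The plan is to prove Theorem \ref{t:27maind} by a Calder\'on-Zygmund decomposition followed by a careful linearization of the supremum defining $T_*$. Fix $\lambda > 0$ and perform the standard Calder\'on-Zygmund decomposition of $f$ at height $\lambda$: write $f = g + b$ with $b = \sum_{Q \in \mathcal{Q}} b_Q$, where $\mathcal{Q}$ is a family of pairwise disjoint dyadic cubes, each $b_Q$ is supported on $Q$, has mean zero, and satisfies $\|b_Q\|_{L^1} \lesssim \lambda |Q|$, while $\|g\|_{L^2}^2 \lesssim \lambda \|f\|_{L^1}$. The good function is handled by the $L^2$ bound for $T_*$ obtained from the pointwise estimate \eqref{e:27point2} together with the $L^2$ boundedness of $M_\Omega$ and $T_{\Omega,*}$. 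The bad function is treated outside the enlarged exceptional set $E^* = \bigcup_{Q \in \mathcal{Q}} C Q$, whose measure is $\lesssim \|f\|_{L^1}/\lambda$. Thus everything reduces to showing
\begin{equation*}
\big| \{ x \notin E^* : T_* b(x) > \lambda \} \big| \lesssim \mathcal{C}_\Omega \|f\|_{L^1}/\lambda,
\end{equation*}
which is precisely the decay estimate referenced as Lemma \ref{l:l3}.

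To attack this, I would first exploit the physical dyadic structure: since $\mathcal{K}_j$ is supported in the annulus $\{2^{j-4} < |x| < 2^{j-2}\}$, for each dyadic cube $J$ of sidelength $2^j$ one can localize the operator as $T_J f(x) = \int \mathcal{K}_j(x-y) (f \chi_{\frac{1}{2} J})(y) \, dy$, so $\operatorname{supp}(T_J f) \subseteq J$ (up to harmless constants). Then for a fixed $x$, the quantity $\sum_{j \geq l} T_j f(x)$ is really a sum over the tower of dyadic cubes $J \ni x$ of sidelength $\geq 2^l$, so $T_* f(x)$ becomes a supremum of partial sums along a linearly ordered chain of cubes. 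The central idea is to use the Rademacher-Menshov theorem (Lemma \ref{l:l4}) to dominate this supremum of partial sums by an $\ell^2$-type square function with a $\log^2$ loss; this is the linearization step. To make this effective globally one must organize all dyadic cubes into a partition that, at each point, collapses into such a natural net — this is carried out by the iterative construction hinted at in the introduction.

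Once the linearization is in place, the task becomes estimating, in $L^2$ or $L^1$, quantities of the form $\sum_J T_J b$ where the sum is restricted by the scale/location relationship between the physical cube $J$ and the CZ cubes $Q \in \mathcal{Q}$. I would split the analysis according to whether $l(Q)$ is much smaller than, comparable to, or larger than $l(J)$. In the regime $l(Q) \ll l(J)$, the cancellation $\int b_Q = 0$ is exploited against a smoothness/oscillation estimate for $\mathcal{K}_j$ expressed through a Littlewood-Paley/Fourier decomposition of $\Omega$ on the sphere (the $L \log L$ hypothesis enters through this step, producing the constant $\mathcal{C}_\Omega$). In the opposite regime $l(Q) \gtrsim l(J)$, the support localization $\operatorname{supp}(T_J f) \subseteq J$ and the fact that we work outside $E^*$ force $J$ to sit in a thin shell near $\partial Q$, yielding a small-measure gain. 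Summing over scales while tracking the Rademacher-Menshov logarithmic factors gives the required $L^1 \to L^{1,\infty}$ bound.

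The main obstacle I expect is precisely the combination of the maximal (nonlinear) nature of $T_*$ with the roughness of $\Omega$: one must linearize without destroying the delicate cancellation between consecutive pieces $T_j$ that is the mechanism for the weak $(1,1)$ estimate in the linear case \cite{CR88,See96}. Getting the Rademacher-Menshov decomposition to be compatible with the physical supports of the $\mathcal{K}_j$ and with the CZ cube geometry — so that the square-function produced by Rademacher-Menshov still admits an $L^2$ bound depending only on $\mathcal{C}_\Omega$ and not on the cut-off level $l$ — is the technically hard part, and is where the iterative partition of the dyadic cubes described in the introduction will do the heavy lifting.
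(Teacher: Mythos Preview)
Your proposal follows the same architecture as the paper (CZ decomposition, physical localization of $T_j$ to cubes, Rademacher--Menshov linearization, then decay estimates for the linearized problem), but it leaves unspecified the two steps where the real work lies, and in one place the mechanism you describe is not quite the right one.

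\textbf{The Rademacher--Menshov step.} You note that at each fixed $x$ the sum $\sum_{j\ge l}T_jb(x)$ runs over a tower of dyadic cubes and propose to apply Lemma~\ref{l:l4}. But Rademacher--Menshov costs a factor $\log N$, where $N$ is the number of terms, and for a generic $x$ the tower of cubes $K\ni x$ with $b_{k-s}\chi_{\frac12K}\neq0$ can be arbitrarily long. The paper addresses this by the iteration defining the sets $F_s^n$ (Definition~\ref{d:27Fsn}): one proves a Carleson-type bound~(\ref{e:e8}), uses it to cap the number of generations inside each layer $\mathcal{I}_s^{\#,n}$ at $u_0=C_02^{ds}$, and obtains geometric decay $2^{-2n}$ in the layer index from Lemma~\ref{l:27Fsn}. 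Thus the Rademacher--Menshov loss per layer is only $\log u_0\approx s$, which is absorbed by the $2^{-\delta s}$ decay of the linearized estimate, while the sum over $n$ converges. Your sentence ``this is carried out by the iterative construction hinted at in the introduction'' is precisely where the proof lives; without this construction there is no a~priori control on $N$ and the argument does not close.

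\textbf{The decay mechanism.} You write that the cancellation of $b_Q$ is played against a ``smoothness/oscillation estimate for $\mathcal{K}_j$'' coming from a Littlewood--Paley decomposition of $\Omega$. For rough $\Omega$ there is no smoothness to exploit; the decay in $s$ comes instead from Seeger's microlocal decomposition (Subsection~\ref{s:27submic}): one splits $T_j=\sum_v T_j^v$ by angular sectors of aperture $\approx 2^{-\gamma s}$, separates each $T_j^v$ into $G_v^sT_j^v$ and $(I-G_v^s)T_j^v$, and proves exponential decay in $s$ for each piece (Lemmas~\ref{l:l5}, \ref{l:l6}, \ref{l:27L^3}). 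For the second piece the paper interpolates between an $L^1$ estimate with good decay and an $L^3$ estimate with only polynomial growth in $s$; this interpolation step is not visible in your sketch. Also, the $L\log L$ hypothesis does not enter through the oscillation estimate: it is used via the splitting $\Omega=\Omega_1+\Omega_2$ at threshold $2^{\eta s}\|\Omega\|_1$, where the large-value part $\Omega_1$ is disposed of by a direct $L^1$ bound (Lemma~\ref{l:27L1}) summing $\sum_s\|\Omega_1\|_1\lesssim\|\Omega\|_{L\log L}$, and the microlocal analysis is applied only to the bounded piece $\Omega_2$.
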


\vskip0.24cm

\subsection{Further dyadic decomposition} Before proceeding further, we introduce some dyadic systems. Let $\mathfrak{D}$ be a set of standard dyadic cubes in $\mathbb{R}^d$, i.e.
$$\mathfrak{D}=\Big\{\prod_{j=1}^d[m_j2^k,(m_j+1)2^k):\ {(m_1,m_2,\cdots,m_d)}\in\Z^d,\ k\in\Z\Big\}.$$

For each $\overrightarrow{w}\in \{0,\fr12\}^d$, define $\mathfrak{D}^{\overrightarrow{w}}$ as the standard dyadic grid shifted by $\overrightarrow{w}$, i.e. $\mathfrak{D}^{\overrightarrow{w}}$ is the set of these dyadic cubes
$$\Big\{2^k\overrightarrow{w}+\prod_{j=1}^d[m_j2^k,(m_j+1)2^k):\ {(m_1,m_2,\cdots,m_d)}\in\Z^d,\ k\in\Z\Big\}.$$
Then it is easy to get the identity
\begin{align*}
  \sum_{\overrightarrow{w}\in\{0,\fr12\}^d}\sum_{K\in \mathfrak{D}^{\overrightarrow{w}}:l(K)=2^k} \chi_{\fr12 K} = 1,\ \ \text{for every}\ k\in \mathbb{Z}.
\end{align*}

Applying this identity, we see that
\Be\label{e:27id}
\begin{split}
  T_k g(x)
  & =\sum_{\overrightarrow{w}\in\{0,\fr12\}^d}\sum_{K\in \mathfrak{D}^{\overrightarrow{w}}:l(K)=2^k} \int_{\mathbb{R}^d} \mathcal{K}_k (x-y)(g\chi_{\fr12K})(y)dy\\
  & \triangleq \sum_{\overrightarrow{w}\in\{0,\fr12\}^d}\sum_{K\in \mathfrak{D}^{\overrightarrow{w}}:l(K)=2^k} T_K g(x),
\end{split}
\Ee
where for each $K\in \mathfrak{D}^{\overrightarrow{w}}$ with $l(K)=2^k$, $T_K g(x)$ is defined by
$$\int_{\mathbb{R}^d} \mathcal{K}_k (x-y)(g\chi_{\fr12K})(y)dy.$$
Notice that by the support of $\mathcal{K}_k$ and $g\chi_{\fr12K}$, we get that $T_Kg(x)$ is supported in $K$:
\Be\label{e:27supp}
\supp(T_Kg)\subseteq K.
\Ee
This is an important property of $T_Kg(x)$ which will be frequently used in our later proof.

We also point out that the following  three helpful properties of  dyadic cubes in $\mathfrak{D}$:

(D-1)\ For any $K\in\mathfrak{D}$, $l(K)=2^{k}$ where $k\in\Z$;

(D-2)\ $K\cap J\in\{K,J,\emptyset\}$ for any $K,J\in\mathfrak{D}$;

(D-3)\ these cubes of a fixed sidelength of $2^k$ form a partition of $\R^d$.

The dyadic grid $\mathfrak{D}^{\overrightarrow{w}}$ satisfies the above properties (D-1) and (D-3). However, for $\vec{w} \neq (0,0)$, the intersected cubes in this grid are not necessarily nested; that is, the property (D-2) may fail.  Nevertheless if  bisecting each sides of a cube $K\in\mathfrak{D}^{\overrightarrow{w}}$,  we then get $2^d$
congruent dyadic cubes and this new family of dyadic cubes satisfy the property (D-2).
This strategy in this manner is very useful in our later proof.
\vskip0.24cm

\subsection{Calder\'on-Zygmund decomposition}
Let us consider the function $f$ and the constant $\lambda$ given in Theorem \ref{t:27maind} and fix $f,\lambda$ in the rest of this paper.
Define
\Be\label{e:27ccon}
\mathcal{C}_\Om\triangleq\|\Om\|_{L\log L(S^{d-1})}+\int_{S^{d-1}}|\Om(\tet)|\big(1+\log^+({|\Om(\tet)|}/{\|\Om\|_{1}})\big)d\sigma(\tet).
\Ee
Since $\|\Om\|_{L\log^+L(S^{d-1})}<+\infty$, one can easily check that $\mathcal{C}_\Om$ is a finite constant.

By performing the Calder\'on-Zygmund decomposition of $f$ at level $\lambda/{\C_\Omega}$ in the dyadic system $\mathfrak{D}$ (see \cite{Gra14C}), we get a countable set of  dyadic cubes $\mathcal{Q}\subseteq\mathfrak{D}$ and the following conclusions:

\begin{align*}
  \tag{cz-i} & f=h+b,\ \|h\|_\infty \lesssim {\lam}/{\C_\Omega},\ \ \|h\|_{L^1(\mathbb{R}^d)}\lc\|f\|_{L^1(\mathbb{R}^d)};  \\
  \tag{cz-ii} & b=\sum_{Q\in\mathcal{Q}}b_Q,\ \text{each $b_Q$ satisfies }\int_Q b_Q(x)dx = 0,\ \supp (b_Q) \subseteq Q;  \\
  \tag{cz-iii} & \forall Q \in \mathcal{Q}: \|b_Q\|_{L^1(\mathbb{R}^d)} \lesssim \langle f \rangle_Q |Q|, \  \langle f\rangle_{Q}\approx {\lam}/{\C_\Omega}; \\
  \tag{cz-iv} & \text{All dyadic cubes in $\mathcal{Q}$ are disjoint}; \\
  \tag{cz-v} & \text{Let $E = \bigcup_{Q \in \mathcal{Q}} Q$. Then $|E| \lesssim \frac{\C_{\Omega}}{\lambda}\|f\|_{L^1(\mathbb{R}^d)}$}. \\
\end{align*}

Now we start to prove Theorem \ref{t:27maind}. Using the property (cz-i), we decompose $f=h+b$ and obtain
$$|\{x\in\mathbb{R}^d: T_{*}f(x)>\lambda\}|\leq |\{x\in\mathbb{R}^d:T_{*}h(x)>\lambda/2\}|+|\{x\in\mathbb{R}^d:T_{*}b(x)>\lambda/2\}|.$$
By the Chebyshev inequality, the fact $T_*$ is $L^2$-bounded with operator norm at most $C\|\Om\|_{L\log^+L(S^{d-1})}$ and the property (cz-i), we get
\Bes
\begin{split}
|\{x\in\mathbb{R}^d:T_{*}h(x)>\lambda/2\}|&\lc {\lambda ^{-2}}\|T_{*}h\|_{L^2(\mathbb{R}^d)}^2\\
&\lc{\lambda^{-2}}(\|\Om\|_{L\log^+L(S^{d-1})}\|h\|_{L^2(\mathbb{R}^d)})^2\lc{\lambda}^{-1}\C_\Om\|f\|_{L^1(\mathbb{R}^d)}.
\end{split}
\Ees

Set $E^*=\bigcup_{Q\in \mathcal{Q}}2^{300}Q$. Then we see
\[|\{x\in\mathbb{R}^d:T_{*}b(x)>\lambda/2\}|\leq |E^*|+|\{x\in (E^*)^c:T_{*}b(x)>\lambda/2\}|.\]
By properties (cz-iv) and (cz-v), the set $E^*$ satisfies
$
|E^*|\lc |E|\lc{\lambda}^{-1}\C_\Om\|f\|_{L^1(\mathbb{R}^d)}.
$
Thus, to complete the proof of Theorem \ref{t:27maind}, it remains to show
\begin{equation}\label{e:weak}
|\{x\in (E^*)^c:T_{*}b(x)>\lambda/2\}|\lc {\lambda}^{-1}\C_\Om\|f\|_{L^1(\mathbb{R}^d)}.
\end{equation}

\vskip0.24cm

\subsection{Estimates related to bad functions} Set $\mathcal{Q}_s=\{Q\in \mathcal{Q}:l(Q)=2^s\}$. Define
$b_s(x) = \sum_{Q \in \mathcal{Q}_s} b_Q(x)$. Then $b(x) = \sum_{s\in\Z} b_s(x)$.
We write
$$T_*b(x)=\sup_{l\in \mathbb{Z}} \Big|\sum_{k\geq l}\sum_{s\in\Z}T_k b_{k-s}(x)\Big|.$$

Note that $T_kb_{k-s}(x)=0$ if $x\in (E^*)^c$ and $s<200$. Therefore we obtain
\begin{equation}\label{e:27Tjbj}
\begin{split}
\big|\big\{x\in (E^*)^c:&\,T_{*}b(x)>{\lambda}/{2}\big\}\big|\\
&=\Big|\Big\{x\in (E^*)^c:\sup_{l\in \mathbb{Z}} \big|\sum_{k\geq l}\sum_{s\geq200}T_kb_{k-s}(x)\big|>{\lambda}/{2}\Big\}\Big|.\\
\end{split}
\end{equation}
Using the equality \eqref{e:27id} and the triangle inequality, we get
\Bes
\sup_{l\in \mathbb{Z}} \Big|\sum_{k\geq l}\sum_{s\geq200}T_kb_{k-s}(x)\Big|\leq \sum_{\overrightarrow{w}\in\{0,\fr12\}^d}\sum_{s\geq200}\sup_{l\in \mathbb{Z}}\Big|\sum_{K\in\mathfrak{D}^{\overrightarrow{w}}:l(K)\geq 2^l}T_Kb_{k-s}(x)\Big|,
\Ees
here and in the sequel, $k=k(K)$ is the integer such that $l(K)=2^k$. Substituting the above estimate into \eqref{e:27Tjbj}, we obtain
\begin{equation*}
\begin{split}
\big|\big\{x\in (E^*)^c:&\,T_{*}b(x)>{\lambda}/{2}\big\}\big|\\
&\leq\sum_{\overrightarrow{w}\in\{0,\fr12\}^d}\Big|\Big\{x\in \mathbb{R}^d:\sum_{s\geq200}\sup_{l\in \mathbb{Z}}\big|\sum_{K\in\mathfrak{D}^{\overrightarrow{w}}:l(K)\geq 2^l}T_Kb_{k-s}(x)\big|>2^{-d-1}{\lambda}\Big\}\Big|.\\
\end{split}
\end{equation*}

Hence to prove \eqref{e:weak}, it is enough to establish the estimate below
\Be\label{e:27weakf}
\Big|\Big\{x\in \mathbb{R}^d:\sum_{s\geq200}\sup_{l\in \mathbb{Z}}\big|\sum_{K\in\mathfrak{D}^{\overrightarrow{w}}:l(K)\geq 2^l}T_Kb_{k-s}(x)\big|>2^{-d-1}{\lambda}\Big\}\Big|\lc {\lambda}^{-1}\C_\Om\|f\|_{L^1(\mathbb{R}^d)}
\Ee
for each $\overrightarrow{w}\in\{0,\fr12\}^d$. We fix $\overrightarrow{w}\in\{0,\fr12\}^d$ in our later proof.

Let $s\geq200$ be the integer in \eqref{e:27weakf}. In the following, we make a decomposition of the homogeneous function $\Omega$
$$\Omega(\tet)=\Omega(\tet)\chi_{\{|\Om(\tet)|> 2^{\eta s}\|\Om\|_1\}}+\Omega(\tet)\chi_{\{|\Om(\tet)|\leq 2^{\eta s}\|\Om\|_1\}}\triangleq\Omega_1(\tet)+\Omega_2(\tet) $$
where $\eta$ is a positive sufficiently small constant to be chosen later. Hence we split the kernel $\mathcal{K}_k$ into two parts
$$\mathcal{K}_k(x) = \varphi_k(x)\frac{\Omega_1(x)}{|x|^d}+\varphi_k(x)\frac{\Omega_2(x)}{|x|^d}\triangleq \mathcal{K}_{k,1}(x)+\mathcal{K}_{k,2}(x).$$
Consequently we decompose the operator $T_K$ into two parts
$$T_K g(x)=\mathcal{K}_{k,1}*(g\chi_{\fr12K})(x)+\mathcal{K}_{k,2}*(g\chi_{\fr12K})(x)\triangleq T_{K,1} g(x)+T_{K,2} g(x).$$

For $T_{K,1}$, we have the following simple $L^1$ estimate.
\begin{proposition}\label{l:27L1}
With all notions above, we get
$$\sum_{s\geq200}\sum_{K\in\mathfrak{D}^{\overrightarrow{w}}}\|T_{K,1}b_{k-s}\|_{L^1(\mathbb{R}^d)}\lc\C_\Om\|f\|_{L^1(\mathbb{R}^d)}.$$
\end{proposition}
\begin{proof}
A straightforward estimation yields
$$\|T_{K,1}b_{k-s}\|_{L^1(\mathbb{R}^d)}\lc\int_{S^{d-1}}|\Omega_1(\tet)|d\sigma(\tet)\|b_{k-s}\chi_{\fr12K}\|_{L^1(\mathbb{R}^d)}.$$
Therefore by properties (cz-iii), (cz-iv) and (cz-v) in the Calder\'on-Zygmund decomposition, we derive
\begin{equation*}
\begin{split}
\sum_{s\geq200}&\sum_{K\in\mathfrak{D}^{\overrightarrow{w}}}\|T_{K,1}b_{k-s}\|_{L^1(\mathbb{R}^d)}\lc
\sum_{s\geq200}\sum_{k\in\Z}\|b_{k-s}\|_{L^1(\mathbb{R}^d)}\int_{S^{d-1}}|\Omega_1(\tet)|d\sigma(\tet)\\&
\lc\|f\|_{L^1(\mathbb{R}^d)}\int_{S^{d-1}}\card\big\{s\in \Z_+:\, s\geq 200, 2^{\eta s}< |\Om(\tet)|/\|\Om\|_1\big\}|\Om(\tet)|d\sigma(\tet)\\
&\lc\|f\|_{L^1(\mathbb{R}^d)}\int_{S^{d-1}}|\Om(\tet)|\big(1+\log^+(|\Om(\tet)|/\|\Om\|_1)\big)d\sigma(\tet)\lc\C_\Om\|f\|_{L^1(\mathbb{R}^d)}
\end{split}
\end{equation*}
which completes the proof.
\end{proof}
For the term $T_{K,2}$, the following $L^2$ estimate with an exponential decay in $s$ constitutes the crucial part of our proof.

\begin{proposition}\label{l:l3}
  With all notions above,  for any $s\geq200$,  there exists a constant $\delta >0$ such that
\begin{align*}
  \Big\|\sup_{l \in\Z}\big|\sum_{K \in \mathfrak{D}^{\overrightarrow{w}},l(K)\geq 2^l } T_{K,2} b_{k-s}\big| \Big\|_{L^2(\mathbb{R}^d)} \leq \big(s^22^{-\delta s}\C_\Omega\lambda\|f\|_{L^1(\mathbb{R}^d)})^{\fr12}.
\end{align*}
\end{proposition}
The proof of Proposition \ref{l:l3} will be presented in the next section. Applying Proposition \ref{l:27L1} and Proposition \ref{l:l3}, we can finish the proof of \eqref{e:27weakf} as follows. Splitting $T_K$ as $T_{K,1}$ and $T_{K,2}$, together with the Chebyshev inequality and the triangle inequality,
\Bes
\Big|\Big\{x\in \mathbb{R}^d:\sum_{s\geq200}\sup_{l\in \mathbb{Z}}\big|\sum_{K\in\mathfrak{D}^{\overrightarrow{w}}:l(K)\geq 2^l}T_Kb_{k-s}(x)\big|>2^{-d-1}{\lambda}\Big\}\Big|\lc I+II
\Ees
where
\Bes
\begin{split}
I&={\lambda}^{-1}\sum_{s\geq200}\sum_{K\in\mathfrak{D}^{\overrightarrow{w}}}
\|T_{K,1}b_{k-s}\|_{L^1(\mathbb{R}^d)}\\
\end{split}
\Ees
and
\Bes
\begin{split}
II&={\lambda}^{-2}  \Big(\sum_{s\geq200}\Big\|\sup_{l \in\Z}\big|\sum_{K \in \mathfrak{D}^{\overrightarrow{w}},l(K)\geq 2^l } T_{K,2} b_{k-s}\big| \Big\|_{L^2(\mathbb{R}^d)}\Big)^2.
\end{split}
\Ees

By Proposition \ref{l:27L1}, $I$ is bounded by $\C_\Omega\lam^{-1}\|f\|_{L^1(\mathbb{R}^d)}$. By Proposition \ref{l:l3}, $II$ is also bounded by $\C_\Omega\lam^{-1}\|f\|_{L^1(\mathbb{R}^d)}$. So we prove \eqref{e:27weakf}. Hence we complete the proof of Theorem \ref{t:27maind} based on Proposition \ref{l:l3}.

\vskip 0.24cm
\section{Proof of Proposition \ref{l:l3}}\label{s:273}

In this section, we give the proof of Proposition \ref{l:l3}. Let $s\geq200$ be a fixed integer in the rest of the proof.
With slight abuse of notation, we still use $T_K$, $\mathcal{K}_k$ and $\Omega$ to represent $T_{K,2}$, $\mathcal{K}_{k,2}$ and $\Omega_2$ respectively which will not cause confusions in this and later section. From the definition of $\Omega_2$, we only need to assume that  $\|\Omega\|_\infty\leq2^{\eta s}\|\Omega\|_1$.

We begin by presenting some preliminary lemmas in Subsection~\ref{s:27s31}. Subsection~\ref{s:2733} gives a partition of the dyadic cubes appearing in the sum for the maximal function in Proposition \ref{l:l3} and states a key result (see Proposition \ref{l:27exclude}). Using Proposition \ref{l:27exclude}, we then prove Proposition \ref{l:l3} in Subsection~\ref{s:2734}. Finally, the proof  for the linearization of the maximal operator in Proposition \ref{l:27exclude} is provided in Subsection~\ref{s:2735}. While  the proofs for estimates of the linearized operator will be given in Section \ref{s:274} and Section \ref{s:275}.

\subsection{Some preliminary lemmas}\label{s:27s31}

The first lemma  is the H\"ormander-Mihlin multiplier theorem with explicit bounds which can be found in \cite{Gra14C}.
\begin{lemma}\label{l:27Hmihlin}
Let $m$ be a complex-value bounded function on $\R^d\setminus\{0\}$ which satisfies
$$|\pari_\xi^\alp m(\xi)|\leq \mathcal{A}|\xi|^{-|\alp|}$$
for all multi indices $|\alp|\leq [\fr{d}{2}]+1$. Then the operator $T_m$ defined by
$$\widehat{T_mg}(\xi)=m(\xi)\hat{g}(\xi)$$
is of strong type $(p,p)$ for $1<p<\infty$ with bound $C_d(\mathcal{A}+\|m\|_{L^\infty(\R^d)})$.
\end{lemma}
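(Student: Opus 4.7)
The plan is to reduce the statement to the Calder\'on--Zygmund theorem applied to the convolution kernel $K=\F^{-1}m$ (interpreted distributionally off the origin). The $L^2$ bound for $T_m$ is immediate from Plancherel together with the trivial inequality $\|m\|_\infty\leq \mathcal{A}+\|m\|_\infty$. Hence it suffices to verify H\"ormander's regularity condition
\Be\label{e:hormander}
\sup_{y\neq 0}\int_{|x|>2|y|}\bigl|K(x-y)-K(x)\bigr|\,dx \lc \mathcal{A}+\|m\|_\infty;
\Ee
once this is in hand, the Calder\'on--Zygmund theorem yields a weak $(1,1)$ estimate with the claimed constant, Marcinkiewicz interpolation against the $L^2$ bound gives strong type $(p,p)$ for $1<p\leq 2$, and duality ($T_m^{\ast}=T_{\bar m}$, with $\bar m$ satisfying the same derivative hypothesis) completes the range $2\leq p<\infty$.

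To prove \eqref{e:hormander} I would fix a radial Littlewood--Paley bump $\psi\in \Coi(\R^d)$ supported in $\{2^{-1}<|\xi|<2\}$ with $\sum_{j\in\Z}\psi(2^{-j}\xi)=1$ for $\xi\neq 0$, and set $m_j(\xi)=\psi(2^{-j}\xi)m(\xi)$, $K_j=\F^{-1}m_j$, so that $K=\sum_j K_j$ distributionally. Since $|\xi|\approx 2^j$ on $\supp m_j$, Leibniz and the hypothesis produce $|\pari_\xi^\alpha m_j(\xi)|\lc (\mathcal{A}+\|m\|_\infty) 2^{-j|\alpha|}$ for all $|\alpha|\leq N:=[d/2]+1$. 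Plancherel, applied after noting $\|x^\alpha K_j\|_{L^2}\lc \|\pari^\alpha m_j\|_{L^2}$, together with the volume bound $|\supp m_j|\approx 2^{jd}$, then delivers
\Bes
\|K_j\|_{L^2}\lc (\mathcal{A}+\|m\|_\infty)\,2^{jd/2}, \qquad \bigl\||\cdot|^N K_j\bigr\|_{L^2}\lc (\mathcal{A}+\|m\|_\infty)\,2^{j(d/2-N)}.
\Ees

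Splitting $\int_{\R^d}|K_j|=\int_{|x|\leq 2^{-j}}+\int_{|x|>2^{-j}}$ and applying Cauchy--Schwarz on each piece (the volume bound handles the near part; tail integrability of $|x|^{-2N}$, which holds precisely because $N>d/2$, handles the far part) yields $\|K_j\|_{L^1}\lc \mathcal{A}+\|m\|_\infty$ uniformly in $j$. The identical argument applied to $\nabla K_j$, whose symbol is $2\pi i\xi m_j(\xi)$, gives $\|\nabla K_j\|_{L^1}\lc (\mathcal{A}+\|m\|_\infty)2^j$. Combining the mean value theorem on $K_j$ with the trivial bound $\int|K_j(x-y)-K_j(x)|\,dx\leq 2\|K_j\|_{L^1}$, one obtains
\Bes
\int_{\R^d}\bigl|K_j(x-y)-K_j(x)\bigr|\,dx \lc (\mathcal{A}+\|m\|_\infty)\min\bigl\{1,\,|y|2^j\bigr\},
\Ees
and summing the resulting geometric series over $j\in\Z$ delivers \eqref{e:hormander}. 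The main technical point---and the only place the precise threshold $N=[d/2]+1$ is used---is the calibration in step two: it is exactly the smallest number of derivatives for which $L^2$-based control of $|x|^N K_j$ converts into a uniform-in-$j$ $L^1$ bound on $K_j$ via Cauchy--Schwarz, with one extra half-power of $2^j$ left over to make the $j$-sum in the H\"ormander integral geometric; once this exchange is calibrated, the remaining ingredients are standard Calder\'on--Zygmund machinery.
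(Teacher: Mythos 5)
The paper does not prove this lemma; it cites \cite{Gra14C}, so you are supplying the standard proof of the H\"ormander--Mikhlin theorem from scratch. Your overall plan (Plancherel for $L^2$, Littlewood--Paley decomposition $m=\sum_j m_j$, weighted $L^2$ estimates on $K_j$, Cauchy--Schwarz to get uniform $L^1$ control, Calder\'on--Zygmund theory plus interpolation and duality) is exactly the route taken in \cite{Gra14C}, and steps one through four are correctly calibrated, including the observation that $N=[d/2]+1$ is the smallest integer exceeding $d/2$.

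However, there is a genuine gap in the final summation. You bound
$$\int_{\R^d}\bigl|K_j(x-y)-K_j(x)\bigr|\,dx \lc (\mathcal{A}+\|m\|_\infty)\min\{1,\,|y|2^j\}$$
by combining the mean value theorem (for small $j$) with the \emph{trivial} bound $2\|K_j\|_{L^1}$ (for large $j$), and then claim the sum over $j\in\Z$ is geometric. It is not: for every $j$ with $|y|2^j\geq 1$ the term equals $1$, so the series diverges. The trivial $L^1$ bound discards exactly the structure that makes the $j$-sum converge. To close the gap you must exploit the restriction $|x|>2|y|$ in the H\"ormander integral together with the weighted estimate $\||\cdot|^N K_j\|_{L^2}\lc (\mathcal{A}+\|m\|_\infty)2^{j(d/2-N)}$ which you already derived. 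For $|y|2^j>1$ one has, by Cauchy--Schwarz on the exterior region,
$$\int_{|x|>2|y|}\bigl|K_j(x-y)-K_j(x)\bigr|\,dx \leq 2\int_{|x|>|y|}|K_j(x)|\,dx \lc (\mathcal{A}+\|m\|_\infty)\,(|y|2^j)^{d/2-N},$$
and since $N>d/2$ this decays geometrically in $j$. Combining with the mean value bound $|y|2^j$ for $|y|2^j\leq1$ gives a two-sided geometric series, and \eqref{e:hormander} follows. Your closing remark about ``one extra half-power of $2^j$ left over'' shows you sensed this was the mechanism, but the displayed inequality does not actually deploy it; as written the argument fails at the summation step.
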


One important technique in our later proof is to linearize the maximal operator.
To that end,  we need the following Rademacher-Menshov theorem (see e.g. \cite[Theorem 10.6]{DTT08}).
\begin{lemma}\label{l:l4}
  Let $(X, \mu)$ be a measure space and $\{f_j\}_{j=1}^N$ be a sequence of measurable functions satisfying the  Bessel-type inequality:
  For any finite sequence $\{\epsilon_j\}_{j=1}^N$ with each $\epsilon_j\in \{-1,1\}$,
  \begin{align*}
    \Big\|\sum_{j=1}^{N}\epsilon_j f_j\Big\|_{L^2(X)} \leq B.
  \end{align*}
  Then the following maximal inequality holds
  \begin{align*}
    \Big\|\sup_{0<M\leq N}\big|\sum_{j=1}^{M} f_j\big| \Big\|_{L^2(X)} \lesssim B\log(2+N).
  \end{align*}
\end{lemma}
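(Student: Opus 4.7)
The plan is to execute the classical Rademacher--Menshov dyadic block argument. By padding the sequence with zeros I may assume $N=2^n$ with $n\approx \log_2(2+N)$, so the goal reduces to $\|\sup_{M}|S_M|\|_{L^2}\lesssim nB$, where $S_M=\sum_{j=1}^{M} f_j$. For each scale $0\leq k\leq n$ and each $0\leq m<2^{n-k}$ I introduce the dyadic block sum
\[
g_{k,m} \;=\; \sum_{j=m2^k+1}^{(m+1)2^k} f_j,
\]
and extract two consequences of the $\pm1$ Bessel-type hypothesis. First, any subset sum can be written as $\sum_{j\in S}f_j=\tfrac12\bigl(\sum_j f_j+\sum_j\epsilon_j^S f_j\bigr)$ with $\epsilon_j^S=2\cdot\mathbf{1}_{j\in S}-1$, so the triangle inequality gives $\|g_{k,m}\|_{L^2}\leq B$ for every block. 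Second, testing the hypothesis with signs that are \emph{constant} on each block at scale $k$ and then averaging over those block-level signs as independent Bernoulli $\pm1$ variables yields the quasi-orthogonality
\[
\sum_{m=0}^{2^{n-k}-1}\|g_{k,m}\|_{L^2}^{2} \;\leq\; B^{2} \qquad \text{for each fixed } k.
\]

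Next I linearize the maximum via binary decomposition. Writing $M\in\{1,\dots,N\}$ in binary as $M=\sum_{k=0}^{n-1}b_k(M)2^{k}$ and telescoping $S_M$ over the partial sums obtained by stripping one bit at a time, I obtain the representation
\[
S_M \;=\; \sum_{k:\,b_k(M)=1} g_{k,\,m_k(M)},
\]
where $m_k(M)$ is determined by the bits of $M$ above level $k$; crucially only blocks from the predetermined finite family $\{g_{k,m}\}$ occur. Consequently, pointwise,
\[
\sup_{1\leq M\leq N}|S_M| \;\leq\; \sum_{k=0}^{n-1}\max_{0\leq m<2^{n-k}}|g_{k,m}|.
\]

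I close the argument with the triangle inequality in $L^{2}$ combined with the trivial pointwise bound $\max_{m}|g_{k,m}|\leq\bigl(\sum_{m}|g_{k,m}|^{2}\bigr)^{1/2}$ and the scale-$k$ orthogonality:
\[
\Bigl\|\sup_{M}|S_M|\Bigr\|_{L^{2}} \;\leq\; \sum_{k=0}^{n-1}\Bigl(\sum_{m}\|g_{k,m}\|_{L^{2}}^{2}\Bigr)^{1/2} \;\leq\; nB \;\lesssim\; B\log(2+N).
\]
This is a classical result with no substantive obstacle; the single delicate point worth checking in the write-up is that the $\pm1$ hypothesis is strong enough to yield the scale-$k$ orthogonality, which relies on the signs being allowed to vary independently over $j$ so that they may be frozen to any pattern that is constant on dyadic blocks.
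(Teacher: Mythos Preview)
Your argument is correct and is the standard Rademacher--Menshov dyadic block proof. The paper does not supply its own proof of this lemma; it simply cites \cite[Theorem~10.6]{DTT08}, so there is no in-paper argument to compare against. Your derivation of the scale-$k$ quasi-orthogonality $\sum_{m}\|g_{k,m}\|_{L^{2}}^{2}\leq B^{2}$ by averaging over block-constant signs is exactly the right way to exploit the $\pm1$ hypothesis, and the binary decomposition of $[1,M]$ into at most $n$ dyadic blocks followed by $\max_{m}|g_{k,m}|\leq(\sum_{m}|g_{k,m}|^{2})^{1/2}$ closes the estimate cleanly.
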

\vskip0.24cm

\subsection{A partition of dyadic cubes}\label{s:2733}
Now we come back to consider the proof of Proposition \ref{l:l3}. We first ignore the maximal function in Proposition \ref{l:l3} and make an appropriate decomposition of cubes $K \in \mathfrak{D}^{\overrightarrow{w}}$ in the following sum
\Be\label{e:27mainnon}
\sum_{K \in \mathfrak{D}^{\overrightarrow{w}}} T_{K} b_{k-s}(x).
\Ee
Since $T_{K}b_{k-s}(x)=\mathcal{K}_k*(b_{k-s}\chi_{\fr12K})(x)$, we only need to consider these cubes $K \in \mathfrak{D}^{\overrightarrow{w}}$ satisfying $b_{k-s}\chi_{\fr12K}\neq0$. The following lemma shows that the total measure of $K$ in the above sum is controllable.
\begin{lemma}
For any measurable set $A\subseteq\mathbb{R}^d$, we have the Carleson measure type estimate
  \begin{equation}\label{e:e8}
    \sum_{K\in\mathfrak{D}^{\overrightarrow{w}};K \subseteq A \atop b_{k-s}\chi_{\frac{1}{2}K} \neq 0} |K| \lesssim 2^{ds} |A|.
  \end{equation}
Meanwhile the following uniform estimate holds
\begin{equation}\label{e:einf}
    \sum_{K\in\mathfrak{D}^{\overrightarrow{w}}; b_{k-s}\chi_{\frac{1}{2}K} \neq 0} |K| \lesssim 2^{ds} \lam^{-1}\C_\Omega\|f\|_{L^1(\mathbb{R}^d)}.
  \end{equation}
\end{lemma}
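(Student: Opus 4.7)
The plan is to set up an injective correspondence between the cubes $K$ appearing in the sum and the cubes $Q\in\mathcal{Q}$, exploiting the huge scale gap $2^{s}\geq 2^{200}$ between $K$ and $Q$. For each $K\in\mathfrak{D}^{\overrightarrow{w}}$ satisfying $b_{k-s}\chi_{\fr12K}\neq0$, the hypothesis produces at least one $Q\in\mathcal{Q}_{k-s}$ with $Q\cap\fr12K\neq\emptyset$; I select one such cube and call it $Q(K)$. Since $l(K)=2^{s}l(Q(K))$, we have the exact volume ratio $|K|=2^{ds}|Q(K)|$, and the whole lemma reduces to the following two claims: (i) the assignment $K\mapsto Q(K)$ is injective, and (ii) $Q(K)\subseteq K$ (which is needed only for \eqref{e:e8}).

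The key geometric input is a separation estimate. For any two distinct cubes $K_1,K_2\in\mathfrak{D}^{\overrightarrow{w}}$ of common sidelength $2^k$, the half-cubes $\fr12K_1$ and $\fr12K_2$ are disjoint and in fact satisfy
\[
\dist\bigl(\tfrac12 K_1,\tfrac12 K_2\bigr)\geq 2^{k-1},
\]
because $\fr12K_i$ is concentric with $K_i$ and lies at distance $2^{k-2}$ from $\partial K_i$, while $K_1$ and $K_2$ are disjoint dyadic cubes of the same generation. On the other hand, $\diam(Q(K))=\sqrt{d}\,2^{k-s}<2^{k-1}$ for $s\geq200$, so the small cube $Q(K)$ can intersect at most one half-cube at scale $2^k$. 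Since the scale $k$ is determined by $Q(K)$ through $k=s+\log_2 l(Q(K))$, this rules out collisions across different scales as well, giving injectivity. The same separation also yields $\dist(\fr12K,\partial K)=2^{k-2}>\diam(Q(K))$, so $Q(K)\subseteq K$, confirming (ii).

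With (i) and (ii) in hand, \eqref{e:e8} is immediate:
\[
\sum_{\substack{K\in\mathfrak{D}^{\overrightarrow{w}};\,K\subseteq A\\ b_{k-s}\chi_{\fr12K}\neq 0}}|K|
=2^{ds}\sum_{K}|Q(K)|
\leq 2^{ds}\sum_{\substack{Q\in\mathcal{Q}\\ Q\subseteq A}}|Q|
\leq 2^{ds}|A|,
\]
where the first inequality uses injectivity of $K\mapsto Q(K)$ together with $Q(K)\subseteq K\subseteq A$, and the last uses the disjointness (cz-iv) of $\mathcal{Q}$. The uniform estimate \eqref{e:einf} follows by the same argument but dropping the restriction $Q\subseteq A$: summing over all $Q\in\mathcal{Q}$ and invoking (cz-v),
\[
\sum_{Q\in\mathcal{Q}}|Q|=|E|\lc\lambda^{-1}\mathcal{C}_{\Omega}\|f\|_{L^1(\mathbb{R}^d)}.
\]

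There is no substantive obstacle; the entire argument is geometric bookkeeping, and the main (almost trivial) observation is the scale-comparison that $\diam(Q)\ll \dist(\fr12K_1,\fr12K_2)$ whenever $l(Q)=2^{-s}l(K_i)$ with $s\geq200$. This guarantees each small CZ cube is spatially ``tagged'' by at most one parent $K$ per scale.
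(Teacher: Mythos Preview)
Your proof is correct and follows essentially the same approach as the paper: associate to each admissible $K$ a cube $Q(K)\in\mathcal{Q}_{k-s}$ meeting $\tfrac12K$, show the assignment is injective with $Q(K)\subseteq K$, and convert the sum over $K$ into a sum over disjoint $Q$'s via $|K|=2^{ds}|Q(K)|$. The only cosmetic differences are that the paper verifies disjointness of the $Q_K$'s using the dyadic dichotomy (distinct $K,J$ are either disjoint or nested) rather than your metric separation estimate, and it routes the final bound through $\int_{Q}|f|$ and (cz-iii) before arriving at $\sum|Q|$, whereas you pass to $\sum|Q(K)|$ directly; your version is slightly more streamlined but the substance is the same.
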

\begin{proof}
Observe that  for each $K\in\mathfrak{D}^{\overrightarrow{w}}$ with $b_{k-s}\chi_{\frac{1}{2}K} \neq 0$,  there exists at least  one cube $Q \in \mathcal{Q}_{k-s}$  such that $b_Q\chi_{\frac{1}{2}K} \neq 0$. Therefore  $Q$ must intersect $\fr{1}{2}K$, here and in the sequel, $Q$ intersects $\fr{1}{2}K$ means their interiors  have a non-empty intersection.
Since $s\geq200$, $K\in\mathfrak{D}^{\overrightarrow{w}}$ with $l(K)=2^k$ and $Q\in\mathfrak{D}$ with $l(Q)=2^{k-s}$,  we get $Q \subsetneq K$ (in fact $Q\subseteq \fr12 K$).
We say such a cube $Q$ is associated with $K$ and denote it as $Q_K$.

We point out all $Q_K$s are disjoint. Indeed, consider any two different dyadic cubes $K,J\in\mathfrak{D}^{\overrightarrow{w}}$ with $b_{k-s}\chi_{\frac{1}{2}K} \neq 0$ and $b_{j-s}\chi_{\frac{1}{2}J} \neq 0$. Then either $l(K)=l(J)$  or $l(K)\neq l(J)$. If $l(K)=l(J)$, then $K\cap J=\emptyset$ hence $Q_K\cap Q_J =\emptyset$ since  the previous observation shows that $Q_K\subseteq K$ and $Q_J\subseteq J$. If the sidelengths of $K$ and $J$ are different, since $Q_K\in\mathcal{Q}_{k-s}$, $Q_J\in\mathcal{Q}_{j-s}$ and all cubes in $\mathcal{Q}$ are disjoint (see (cz-iv)), we get $Q_K\cap Q_J=\emptyset$. So we prove that all $Q_K$s are disjoint.

Let $Q$ be a cube associated with $K$. Then the property (cz-iii) in the Calder\'on-Zygmund decomposition yields the following estimate
  \begin{align*}
    \int_Q |f(y)|dy = |Q|\langle f \rangle_Q \gtrsim |Q|\lam\C_\Omega^{-1} = 2^{-ds} |K| \lam\C_\Omega^{-1}.
  \end{align*}
Since all $Q_K$s associated with $K\in\mathfrak{D}^{\overrightarrow{w}}$  are disjoint, we have
 \Be\label{e:e9}
  \begin{split}
    \sum_{K\in\mathfrak{D}^{\overrightarrow{w}};K \subseteq A \atop b_{k-s}\chi_{\frac{1}{2}K} \neq 0} |K|
    & \lesssim 2^{ds} \lam^{-1}\C_\Omega \sum_{k\in\Z}\sum_{Q\in\mathcal{Q}_{k-s}}\sum_{K\in\mathfrak{D}^{\overrightarrow{w}};l(K)=2^k \atop Q\subsetneq K \subseteq A} \int_Q |f(y)|dy\\
    & \lesssim 2^{ds} \lam^{-1}\C_\Omega \sum_{k\in\Z}\sum_{Q\in\mathcal{Q}_{k-s}; Q\subseteq A} \int_Q |f(y)|dy\\
    &\lc 2^{ds} \lam^{-1}\C_\Omega \sum_{Q\in\mathcal{Q};Q\subseteq A} \lam\C_\Omega^{-1}|Q|\\
    &\lesssim 2^{ds} \min\{|A|,\lam^{-1}\C_\Omega\|f\|_{L^1(\mathbb{R}^d)}\}
  \end{split}
 \Ee
 where  the second inequality follows from that there exists at most one dyadic cube $K\in\mathfrak{D}^{\overrightarrow{w}}$ with fixed sidelength $l(K)=2^k$ such that $K\supsetneq Q$,  while in the  third and fourth  inequalities we use  properties (cz-iii), (cz-iv) and (cz-v) in the Calder\'on-Zygmund decomposition. Hence we  prove \eqref{e:e8} and \eqref{e:einf}.
\end{proof}

Notice the intersected cubes in $\mathfrak{D}^{\overrightarrow{w}}$ may not have the property that one contains the other. To overcome this defect, by bisecting each side of the cube $K\in\mathfrak{D}^{\overrightarrow{w}}$,  we then get $2^d$ disjoint dyadic cubes which are redefined as follows.

\begin{definition}[$K^{\iota}$]
By selecting a fixed sequence order according to their spatial positions, we relabel these $2^d$ dyadic cubes as $K^1, K^2, \ldots, K^{2^d}$.
\end{definition}

For a fixed $1\leq \iota\leq 2^d$, it is easy to see that any two given dyadic cubes in $\{K^{\iota}: K\in\mathfrak{D}^{\overrightarrow{w}}\}$  satisfy  the property: either they are disjoint or one contains the other.  Moreover, if $K^\iota\subseteq J^\iota$ for $K,J\in\mathfrak{D}^{\overrightarrow{w}}$, then $K\subseteq J$.
This follows from the geometric observation that $K^\iota$ and $J^\iota$ occupy the same relative spatial positions within their father cubes $K$ and $J$, respectively.

In what follows, we introduce auxiliary sets $F_{s,\iota}^n$ for integers $n \geq 1$ and $1\leq \iota\leq 2^d$.

\begin{definition}[$F_{s,\iota}^n$]\label{d:27Fsn}
 Define $F_{s,\iota}^1$ as
  \begin{align*}
    F_{s,\iota}^1 \triangleq \Big\{x\in \mathbb{R}^d:\sum_{K\in\mathfrak{D}^{\overrightarrow{w}};  b_{k-s}\chi_{\frac{1}{2}K} \neq 0} \chi_{K^{\iota}}(x) > C_02^{ds}\Big\}
  \end{align*}
where $C_0$ is a large constant to be chosen later. For $n\geq2$, we define the set $F_{s,\iota}^n$ successively as follows:
  \begin{align*}
    F_{s,\iota}^n \triangleq \Big\{x\in \mathbb{R}^d:\sum_{K\in\mathfrak{D}^{\overrightarrow{w}}; K^\iota\subseteq F_{s,\iota}^{n-1} \atop  b_{k-s}\chi_{\frac{1}{2}K} \neq 0} \chi_{K^{\iota}}(x) > C_02^{ds}\Big\}.
  \end{align*}
\end{definition}

We first show how the set $F_{s,\iota}^n$ looks like.
For convenience set $F_{s,\iota}^0=\mathbb{R}^d$.
Since any two dyadic cubes in $\{K^\iota:K\in\mathfrak{D}^{\overrightarrow{w}}\}$  satisfy  the property that either they are disjoint or one contains the other, then  by the definition of $F_{s,\iota}^n (n\geq1)$,
we could observe that for any $x\in F_{s,\iota}^n$, there exists a dyadic cube $K\in\mathfrak{D}^{\overrightarrow{w}}$ such that $K^{\iota}\subseteq F_{s,\iota}^{n-1}$, $b_{k-s}\chi_{\frac{1}{2}K}\neq0$, $x\in K^{\iota}$, $K^{\iota}$ has more than $C_02^{ds}$ ancient dyadic cubes contained in $F_{s,\iota}^{n-1}$ and hence we get $K^\iota\subseteq F_{s,\iota}^n$.
Moreover, for any $x\in F_{s,\iota}^n$, there exists a maximal dyadic cube $K^{\iota}\subseteq F_{s,\iota}^n$ such that all its dyadic subcube $J^\iota$ which satisfies $J^\iota\subseteq K^\iota, J\in\mathfrak{D}^{\overrightarrow{w}}$, $b_{j-s}\chi_{\fr12J}\neq0$ and $J^\iota\subseteq F_{s,\iota}^{n-1}$ must be a subset of $F_{s,\iota}^n$.
Therefore, by choosing the maximal dyadic cubes $K^\iota$ in $F^n_{s,\iota}$, we can write $F_{s,\iota}^n=\bigcup_{K^\iota\in \mathcal{Q}_{s,\iota,n}}{K^\iota}$
where $\mathcal{Q}_{s,\iota,n}$ is a collection of disjoint dyadic cubes.

It is also easy to see that $F_{s,\iota}^1\supseteq F_{s,\iota}^2\supseteq\cdots\supseteq F_{s,\iota}^n\supseteq\cdots$. Regarding their measures, we have the following more refined estimate.

\begin{lemma}\label{l:27Fsn}
For $n\geq1$ and $1\leq \iota\leq 2^d$, the measure of $F_{s,\iota}^n$ satisfies
\Be\label{e:e10}
|F_{s,\iota}^n|\lc 2^{-2n}\lam^{-1}\C_\Om\|f\|_{L^1(\mathbb{R}^d)}.
\Ee
\end{lemma}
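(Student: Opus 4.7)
The plan is to prove the claimed bound by induction on $n$, the base case being extracted from the uniform Carleson-type estimate \eqref{e:einf} and the inductive step from the localized estimate \eqref{e:e8}. In both cases the mechanism is the same: a straightforward application of Chebyshev's inequality converts an $L^1$ bound on the sum $\sum_K \chi_K$ into a measure bound for the super-level set where that sum exceeds $C_0 2^{ds}$, the extra factor $2^{-2n}$ being produced by iteration combined with a sufficiently large choice of the absolute constant $C_0$.

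More precisely, for the base case $n=1$, integrate the defining inequality of $F_s^1$ and use \eqref{e:einf}:
\begin{equation*}
C_0 2^{ds}\,|F_s^1| \leq \int_{F_s^1}\sum_{K\in\mathfrak{D}^{\overrightarrow{w}};\, b_{k-s}\chi_{\frac{1}{2}K}\neq 0}\chi_K(x)\,dx \leq \sum_{K\in\mathfrak{D}^{\overrightarrow{w}};\, b_{k-s}\chi_{\frac{1}{2}K}\neq 0}|K| \lesssim 2^{ds}\lambda^{-1}\C_\Omega\|f\|_{L^1(\mathbb{R}^d)}.
\end{equation*}
Choosing $C_0$ large enough (larger than four times the implicit constant above) we obtain $|F_s^1|\leq \tfrac14\,\lambda^{-1}\C_\Omega\|f\|_{L^1(\mathbb{R}^d)}$, which is \eqref{e:e10} at $n=1$.

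For the inductive step, assume \eqref{e:e10} holds at level $n-1$. Applying the same Chebyshev argument but now to the definition of $F_s^n$, and then invoking the localized Carleson estimate \eqref{e:e8} with $A=F_s^{n-1}$, we get
\begin{equation*}
C_0 2^{ds}\,|F_s^n| \leq \sum_{K\in\mathfrak{D}^{\overrightarrow{w}};\, K\subseteq F_s^{n-1},\, b_{k-s}\chi_{\frac{1}{2}K}\neq 0}|K| \lesssim 2^{ds}\,|F_s^{n-1}|.
\end{equation*}
With $C_0$ chosen as above (larger than four times the implicit constant here as well) we deduce $|F_s^n|\leq \tfrac14\,|F_s^{n-1}|$, and the inductive hypothesis gives $|F_s^n|\leq 2^{-2n}\lambda^{-1}\C_\Omega\|f\|_{L^1(\mathbb{R}^d)}$, as desired.

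The only subtle point is the selection of $C_0$: it must be fixed once and for all at a value that dominates the implicit constants in both \eqref{e:einf} and \eqref{e:e8} (multiplied by $4$), so that the same geometric decay ratio $1/4$ works both at the base case and at every iteration. Since $C_0$ is absolute and the Carleson estimates already have absolute implicit constants, this poses no genuine obstacle; the argument is essentially a pigeonholing-plus-iteration scheme, and the work has already been done in proving \eqref{e:e8} and \eqref{e:einf}.
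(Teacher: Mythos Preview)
Your proposal is correct and follows essentially the same approach as the paper: Chebyshev's inequality combined with \eqref{e:einf} handles the base case, and Chebyshev combined with the localized Carleson estimate \eqref{e:e8} (applied with $A=F_s^{n-1}$) yields the recursive bound $|F_s^n|\leq \tfrac14|F_s^{n-1}|$ once $C_0$ is fixed large enough. The paper phrases the conclusion as ``iterating'' rather than as a formal induction, but the content is identical.
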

\begin{proof}
By the Chebyshev inequality and \eqref{e:einf}, we get for $n=1$
 \begin{align*}
    |F_{s,\iota}^1| \leq \frac{1}{C_02^{ds}}\int\sum_{K \in \mathfrak{D}^{\overrightarrow{w}};\atop b_{k-s}\chi_{\frac{1}{2}K}\neq 0 }\chi_{K^{\iota}}(x)dx = \frac{1}{C_02^{ds+d}} \sum_{K\in\mathfrak{D}^{\overrightarrow{w}};\atop b_{k-s}\chi_{\frac{1}{2}K}\neq 0} |K| \lc\lam^{-1}\C_\Om\|f\|_{L^1(\mathbb{R}^d)}.
  \end{align*}
Similarly for $n\geq2$,
  \begin{align}\label{e:27fnsiota}
    |F_{s,\iota}^n| \leq \frac{1}{C_02^{ds}}\int\sum_{K \in \mathfrak{D}^{\overrightarrow{w}};K^\iota\subseteq F_{s,\iota}^{n-1}\atop
    b_{k-s}\chi_{\frac{1}{2}K}\neq 0}\chi_{K^{\iota}}(x)dx = \frac{1}{C_02^{ds+d}} \sum_{K\in\mathfrak{D}^{\overrightarrow{w}}; K^\iota\subseteq F_{s,\iota}^{n-1} \atop b_{k-s}\chi_{\frac{1}{2}K}\neq 0} |K|.
  \end{align}
Since $F_{s,\iota}^{n-1}$ can be rewritten as a union of the maximal cubes: $F_{s,\iota}^{n-1}=\bigcup_{J^\iota\in\mathcal{Q}_{s,\iota,n-1}}J^\iota$, then for any $K^\iota\subseteq F_{s,\iota}^{n-1}$, there exists a unique maximal dyadic cube $J^\iota\in\mathcal{Q}_{s,\iota,n-1}$ such that $K^\iota\subseteq J^\iota$.
Since the spatial positions of $J^\iota$ and $K^\iota$ are fixed  relative to their father cubes $J$ and $K$, respectively,  it follows that $K\subseteq J$. Hence if we set $\tilde{F}_{s,\iota}^{n-1}=\bigcup_{J^\iota\in\mathcal{Q}_{s,\iota,n-1}}J$, then $K\subseteq\tilde{F}_{s,\iota}^{n-1}$. Therefore \eqref{e:27fnsiota} is majorized by
\begin{align*}
    \frac{1}{C_02^{ds+d}} \sum_{K\in\mathfrak{D}^{\overrightarrow{w}}; K\subseteq \tilde{F}_{s,\iota}^{n-1} \atop b_{k-s}\chi_{\frac{1}{2}K}\neq 0} |K| &\leq 2^{-2-d}|\tilde{F}_{s,\iota}^{n-1}|\leq 2^{-2-d}\sum_{J^\iota\in\mathcal{Q}_{s,\iota,n-1}}|J|\\
    &=2^{-2}\sum_{J^\iota\in\mathcal{Q}_{s,\iota,n-1}}|J^\iota|= 2^{-2}|{F}_{s,\iota}^{n-1}|
  \end{align*}
  where  in the first inequality we use (\ref{e:e8}) and choose the constant $C_0$ large enough, while the last equality follows from $F_{s,\iota}^{n-1}$ can be rewritten as a union of the disjoint cubes $F_{s,\iota}^{n-1}=\bigcup_{J^\iota\in\mathcal{Q}_{s,\iota,n-1}}J^\iota$. Notice that $C_0$ here is independent of $n$, we get $|{F}_{s,\iota}^{n}|\leq 2^{-2}|{F}_{s,\iota}^{n-1}|$ for $n\geq2$. Iterating this estimate, we get \eqref{e:e10}.
\end{proof}
In the following we define a partition of cubes $K\in\mathfrak{D}^{\overrightarrow{w}}$ in the sum \eqref{e:27mainnon}.
\begin{definition}[$\mathcal{I}_{s,\iota}^{\#,n}$]
Let $F_{s,\iota}^n$ be given in Definition \ref{d:27Fsn}. We define $\mathcal{I}_{s,\iota}^{\#,1}$ as the collection of cubes $K \in \mathfrak{D}^{\overrightarrow{w}}$ appearing in the sum \eqref{e:27mainnon} such that $K^\iota$ is not contained in $F_{s,\iota}^1$, that is,
\[
\mathcal{I}_{s,\iota}^{\#,1} \triangleq \big\{K \in \mathfrak{D}^{\overrightarrow{w}} : b_{k-s}\chi_{\frac{1}{2}K} \neq 0,\ K^{\iota} \nsubseteq F_{s,\iota}^1\big\},
\]
where $K^{\iota} \nsubseteq F_{s,\iota}^1$ means either the interior of $K^{\iota}$ is contained in $(F_{s,\iota}^1)^c$ or  intersects both $(F_{s,\iota}^1)^c$ and $F_{s,\iota}^1$.
For $n \geq 2$, we define
\[
\mathcal{I}_{s,\iota}^{\#,n} \triangleq \big\{K \in \mathfrak{D}^{\overrightarrow{w}} : b_{k-s}\chi_{\frac{1}{2}K} \neq 0,\ K^{\iota} \nsubseteq F_{s,\iota}^n,\ K^{\iota} \subseteq F_{s,\iota}^{n-1}\big\},
\]
which consists of cubes $K \in \mathfrak{D}^{\overrightarrow{w}}$ in the sum \eqref{e:27mainnon} such that $K^{\iota}$ is contained in $F_{s,\iota}^{n-1}$ but not in $F_{s,\iota}^n$.
\end{definition}

We now illustrate that the sets $\{\mathcal{I}_{s,\iota}^{\#,n} : n = 1, 2, \dots\}$ form a partition of the cubes $K \in \mathfrak{D}^{\overrightarrow{w}}$ appearing in the sum \eqref{e:27mainnon}. In fact, we need only consider those cubes $K \in \mathfrak{D}^{\overrightarrow{w}}$ for which $b_{k-s} \chi_{\frac{1}{2}K} \neq 0$.

We construct the partition inductively as follows:

    \emph{Step 1}. If $K^{\iota} \nsubseteq F_{s,\iota}^1$, we assign $K$ to $\mathcal{I}_{s,\iota}^{\#,1}$.

    \emph{Step 2}. If $K^{\iota} \subseteq F_{s,\iota}^1$ but $K^{\iota} \nsubseteq F_{s,\iota}^2$, we assign $K$ to $\mathcal{I}_{s,\iota}^{\#,2}$.

     \emph{Step 3}. For cubes with $K^{\iota} \subseteq F_{s,\iota}^2$, we continue this process recursively.

It is then clear that the family $\{\mathcal{I}_{s,\iota}^{\#,n} : n = 1, 2, \dots\}$ indeed partitions the relevant cubes $K \in \mathfrak{D}^{\overrightarrow{w}}$ in \eqref{e:27mainnon}. Therefore, together with \eqref{e:27supp}, we obtain
\Be\label{e:27mainnondecom}
\begin{split}
\sum_{K \in \mathfrak{D}^{\overrightarrow{w}}} T_{K} b_{k-s}(x)&=\sum_{\iota=1}^{2^d}\sum_{K \in \mathfrak{D}^{\overrightarrow{w}}} T_{K} b_{k-s}(x)\chi_{K^{\iota}}(x)\\
&=\sum_{\iota=1}^{2^d}
\sum_{n\geq1}\sum_{K \in \mathcal{I}_{s,\iota}^{\#,n}} T_{K} b_{k-s}(x)\chi_{K^{\iota}}(x).
\end{split}
\Ee

Below let us present a key result for the study of Proposition \ref{l:l3}.
\begin{proposition}\label{l:27exclude}
There exists a positive constant $\delta$ such that for all $n\geq1$ and $1\leq \iota\leq 2^d$ the following estimate holds
  \begin{equation}\label{e:e7}
    \Big\|\sup_{l\in\Z} \big|\sum_{K \in \mathcal{I}_{s,\iota}^{\#,n} \atop l(K) \geq 2^l}(T_K b_{k-s})\chi_{K^{\iota}}\big| \Big\|_{L^2(\mathbb{R}^d)} \lc 2^{-n}\big(s^22^{-\delta{s}}\lam\C_\Om\|f\|_{L^1(\mathbb{R}^d)}\big)^{\fr12}.
  \end{equation}
\end{proposition}

The proof of Proposition \ref{l:27exclude} will be given later. We first use Proposition \ref{l:27exclude} to finish the proof of Proposition \ref{l:l3}.

\vskip0.24cm
\subsection{Proof of Proposition \ref{l:l3}}\label{s:2734}
From the equality \eqref{e:27mainnondecom} and the triangle inequality, we derive
\Bes
\begin{split}
\sup_{l\in\Z}\big|\sum_{K \in \mathfrak{D}^{\overrightarrow{w}}\atop l(K)\geq2^l} T_{K} b_{k-s}(x)\big|&=
\sup_{l\in\Z}\big|\sum_{\iota=1}^{2^d}
\sum_{n\geq1}\sum_{K \in \mathcal{I}_{s,\iota}^{\#,n}\atop l(K)\geq2^l} T_{K} b_{k-s}(x)\chi_{K^\iota}(x)\big|\\
&\leq
\sum_{\iota=1}^{2^d}\sum_{n\geq1}\sup_{l\in\Z}\big|\sum_{K \in \mathcal{I}_{s,\iota}^{\#,n}\atop l(K)\geq2^l} T_{K} b_{k-s}(x)\chi_{K^\iota}(x)\big|.
\end{split}
\Ees
Hence,  by the triangle inequality and Proposition \ref{l:27exclude} we conclude that
  \begin{align*}
    \Big\|\sup_{l\in\Z}\big|\sum_{K \in \mathfrak{D}^{\overrightarrow{w}}\atop l(K)\geq2^l}T_K b_{k-s}\big|\Big\|_{L^2(\mathbb{R}^d)}
    & \leq \sum_{\iota=1}^{2^d}\sum_{n=1}^{\infty} \Big\|\sup_{l\in\Z}\big|\sum_{K \in \mathcal{I}_{s,\iota}^{\#, n} \atop  l(K)\geq2^l}(T_K b_{k-s})\chi_{K^\iota}\big|\Big\|_{L^2(\mathbb{R}^d)} \\
    & \lesssim \big(s^22^{-\delta s}\lam\C_\Omega\|f\|_{L^1(\mathbb{R}^d)}\big)^{\fr12},
  \end{align*}
  which completes the proof of Proposition \ref{l:l3}.
$\hfill{} \Box$
\vskip 0.24cm

\subsection{Proof of Proposition \ref{l:27exclude}: Linearizing the maximal operator}\label{s:2735}\quad
\vskip 0.24cm

The crucial part of the proof is to linearize the maximal operator.    Our strategy in this proof is to linearize the $L^2$ norm for the maximal operator in \eqref{e:e7}. The Rademacher-Menshov theorem plays a key role here.

Set $u_0 = C_02^{ds}$,  where $C_0$ is the constant in Definition \ref{d:27Fsn}.
Recall that for $n\geq1$, $\mathcal{I}_{s,\iota}^{\#,n}$ is defined by the following successive technique
$$\mathcal{I}_{s,\iota}^{\#, n} = \{K \in \mathfrak{D}^{\overrightarrow{w}}: b_{k-s}\chi_{\frac{1}{2}K}\neq0,  K^\iota\nsubseteq F_{s,\iota}^n, K^\iota \subseteq F_{s,\iota}^{n-1}\},$$
where we exclude all dyadic cubes $K$ such that $K^\iota\subseteq F_{s,\iota}^n$. This means that for each $K\in\mathcal{I}_{s,\iota}^{\#,n}$, the interior of $K^\iota$ is contained in $(F_{s,\iota}^n)^c$ or intersects both $F_{s,\iota}^n$ and $(F_{s,\iota}^n)^c$.
Observe that for each cube $K\in\mathcal{I}_{s,\iota}^{\#,n}$, $K^\iota$ has at most $u_0$ ancient dyadic cubes in $F_{s,\iota}^{n-1}$. Otherwise $K^\iota$ has more than $u_0$ ancient dyadic cubes contained in $F_{s,\iota}^{n-1}$, then $K^\iota\subseteq F_{s,\iota}^n$ which is a contradiction. If we set $\mathcal{L}_{s,\iota}^{\#,n}=\{K^\iota:K\in\mathcal{I}_{s,\iota}^{\#,n}\}$, then any two dyadic cubes in $\mathcal{L}_{s,\iota}^{\#,n}$ satisfy the property that either they are disjoint or one contains the other.

Let $\mathcal{L}_1$ denote the collection of the maximal elements of $\mathcal{L}_{s,\iota}^{\#,n}$. Define $\mathcal{L}_2$ as the collection of  the maximal elements of $\mathcal{L}_{s,\iota}^{\#,n}\backslash\mathcal{L}_1$. Proceeding inductively, we set
\[
\mathcal{L}_{u+1} =\{ \text{maximal elements of } \mathcal{L}_{s,\iota}^{\#,n}\backslash\cup_{v=1}^u \mathcal{L}_v\}.
\] See for example in Figure \ref{f:27} how we select  $\mathcal{L}_{1}, \mathcal{L}_2, \cdots,\mathcal{L}_{u+1},\cdots$.
Observe that these dyadic cubes within each $\mathcal{L}_v$ are pairwise disjoint.

By the definition of $\mathcal{L}_{s,\iota}^{\#,n}$ and each $K^\iota\in\mathcal{L}_{s,\iota}^{\#,n}$ has at most $u_0$ ancient cubes in $\mathcal{L}_{s,\iota}^{\#,n}$,  there are most $u_0$ generations for each dyadic cube  $K^\iota\in \mathcal{L}_1$ in $\mathcal{L}_{s,\iota}^{\#,n}$.  Therefore the induction construction argument of $\mathcal{L}_1, \ldots, \mathcal{L}_u$ will
  stop for $u\geq u_0+1$,  i.e.   $\mathcal{L}_{u}=\emptyset$ for $u\geq u_0+1$.

\begin{figure}[!htbp]
\centering
\includegraphics[height=0.18\textwidth]{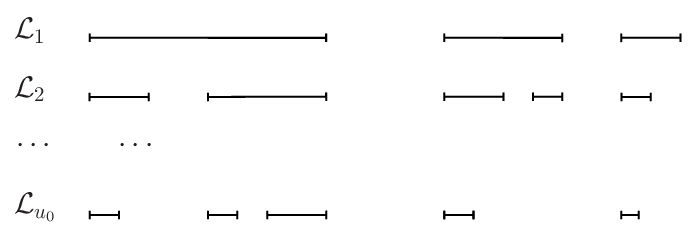}
\caption{\small We illustrate above the selection algorithm for the collections $\mathcal{L}_{u}$ in the one-dimensional case for simplicity. Take $\mathcal{L}_1$ to be the maximal dyadic cubes in $\mathcal{L}_{s,\iota}^{\#,n}$ (containing three cubes in this example). Define $\mathcal{L}_2$ as the maximal elements of $\mathcal{L}_{s,\iota}^{\#,n}\backslash\mathcal{L}_1$ (five cubes in this case). For each $u \geq 1$, set $\mathcal{L}_{u+1}$ to be the maximal elements of $\mathcal{L}_{s,\iota}^{\#,n}\backslash\cup_{v=1}^u\mathcal{L}_v$.}
\label{f:27}
\end{figure}

For $u=1,\cdots, u_0$, set $\mathcal{M}_u=\{K: K^\iota\in\mathcal{L}_u\}$ and
$$\beta_u \triangleq \sum_{K\in \mathcal{M}_u} (T_K b_{k-s})\chi_{K^\iota}.$$

We claim the following orthogonal estimate:
For any choice of $\epsilon_u \in \{-1, 1\}$ where $1\leq u\leq u_0$,  there exists a positive constant $\del$ such that
  \begin{equation}\label{e:e11}
    \Big\|\sum_{u=1}^{u_0} \epsilon_u \beta_u \Big\|_{L^2(\mathbb{R}^d)} \lesssim 2^{-n}\big(2^{-\delta{s}}\lam\C_\Om\|f\|_{L^1(\mathbb{R}^d)}\big)^{\fr12}.
  \end{equation}
The proof of \eqref{e:e11} is postponed in the next section and instead we provide the proof of \eqref{e:e7} first.   By (\ref{e:e11}) together with the Rademacher-Menshov theorem in Lemma \ref{l:l4},  we get that
  \begin{equation}\label{e:e12}
    \Big\|\sup_{0<v\leq  u_0} \big|\sum_{u=1}^{v} \beta_u\big| \Big\|_{L^2(\mathbb{R}^d)} \lesssim s2^{-n}\big(2^{-\delta{s}}\lam\C_\Om\|f\|_{L^1(\mathbb{R}^d)}\big)^{\fr12}.
  \end{equation}

We now show that the maximal function in (\ref{e:e7}) coincides exactly with the one in (\ref{e:e12}).
Applying
$$\beta_u = \sum_{K \in \mathcal{M}_u}(T_K b_{k-s})\chi_{K^\iota}= \sum_{K^\iota \in \mathcal{L}_u}(T_K b_{k-s})\chi_{K^\iota},$$
and these cubes $K^\iota\in\mathcal{L}_u$ in the above sum are disjoint for each $u$
together with $\supp[(T_K b_{k-s})\chi_{K^\iota}] \subseteq K^\iota$,  we obtain a crucial property of the following nesting relation for cubes in $\mathcal{L}_u$:   For any $J^\iota \in \mathcal{L}_u$ and $K^\iota\in \mathcal{L}_v$ with $u < v$, we have either $J^\iota\cap K^{\iota} = \emptyset$ or $K^\iota \subseteq J^\iota$. This implies that for each $x$ in the support  of
  $$\sum\limits_{K \in \mathcal{I}_{s,\iota}^{\#,n}} (T_K b_{k-s})\chi_{K^\iota}$$
and each $0<u\leq u_0$, either there exists no dyadic cube $K^\iota \in \mathcal{L}_u$ such that   $x\in K^\iota$ or there exists exactly one dyadic cube $K^{\iota} \in \mathcal{L}_u$ containing $x$.
In the latter case, we denote this unique cube by $K^{\iota}_{u,x}$ and its father cube in $\mathfrak{D}^{\overrightarrow{w}}$ by $K_{u,x}$.
Then these cubes $\{K^{\iota}_{u,x}\}_{u}$ naturally form a nested sequence  containing $x$.

As illustrated in Figure~\ref{f:27}, we consider a point $x$ contained in the third cube of $\mathcal{L}_{u_0}$. Then the corresponding cubes from $\mathcal{L}_1$ (the first one), $\mathcal{L}_2$ (the second one), $\cdots$ and $\mathcal{L}_{u_0}$ (the third one) form a sequence of dyadic cubes containing $x$. Consequently, it suffices to evaluate the maximal function over this nested sequence of dyadic cubes.
Therefore we obtain
  \begin{align*}
    \sup_{l \in\Z} \Big|\sum_{K \in \mathcal{I}_{s,\iota}^{\#,n};2^l \leq l(K)}& T_K b_{k-s}(x)\chi_{K^\iota}(x)\Big|
     = \sup_{1\leq v\leq u_0}\Big|\sum_{u=1}^v T_{K_{u,x}}b_{k-s}(x)\chi_{K^\iota}(x)\Big| \\
    &= \sup_{1\leq v\leq u_0} \Big|\sum_{u=1}^{v} \sum_{K \in \mathcal{M}_u} T_K b_{k-s} (x)\chi_{K^\iota}(x)\Big| = \sup_{1\leq v\leq u_0} \Big|\sum_{u=1}^{v}  \beta_u(x)\Big|.
  \end{align*}
By the above discussion, we get (\ref{e:e7}) from (\ref{e:e12}). Hence we complete the proof of Proposition \ref{l:27exclude} based on claim \eqref{e:e11}.
$\hfill{} \Box$
\vskip 0.24cm

\section{Proof of Proposition \ref{l:27exclude}: estimate for claim \eqref{e:e11} of linearized operator}\label{s:274}

In this section, we present the proof of claim \eqref{e:e11}. First, we express the operator $\sum_{u=1}^{u_0} \epsilon_u \beta_u$ in a more explicit form and reformulate claim \eqref{e:e11} as Proposition \ref{l:27finall2} below.
We then decompose this explicit operator  into four parts and reduce the proof of Proposition \ref{l:27finall2} to establishing four key lemmas (Lemma \ref{l:27finall21}, Lemma \ref{l:27finall22}, Lemma \ref{l:27finall23}, and Lemma \ref{l:27finall24}). The remainder of this section is devoted to  the proofs of Lemma \ref{l:27finall21}, Lemma \ref{l:27finall22}, and Lemma \ref{l:27finall23}. Before that, we will also give some $L^3$ trivial estimates for the decomposed operators. The proof of Lemma \ref{l:27finall24}, being more involved, is deferred to Section \ref{s:275}.
\vskip0.24cm

\subsection{Restatement of claim \eqref{e:e11}}
Write
  \begin{align*}
    \Big\|\sum_{u=1}^{u_0} \epsilon_u \beta_u\Big\|_{L^2(\mathbb{R}^d)}
    & = \Big\|\sum_{u=1}^{u_0} \epsilon_u \sum_{K \in \mathcal{M}_u}(T_K b_{k-s})\chi_{K^\iota}\Big\|_{L^2(\mathbb{R}^d)} \\
    & = \Big\|\sum_{j \in \mathbb{Z}} \sum_{J\in \mathcal{I}_{s,\iota}^{\#,n} \atop l(J)=2^j} \epsilon_J (T_J b_{j-s})\chi_{J^\iota}\Big\|_{L^2(\mathbb{R}^d)},
  \end{align*}
where we make a change of notions $K\mapsto J,\ k\mapsto j$ and set $\epsilon_J=\epsilon_u$ for $J\in\mathcal{M}_u$.
Recall that the kernel of $T_j$ is $\mathcal{K}_j$.  For a fixed $j$,  we have
\Be\label{e:27TJQ1}
  \begin{split}
    \sum_{J \in \mathcal{I}_{s,\iota}^{\#,n} \atop l(J)=2^j}\epsilon_J T_J b_{j-s}(x)\chi_{J^\iota}(x) &= \sum_{{J \in \mathcal{I}_{s,\iota}^{\#,n} \atop l(J)=2^j}}\mathcal{K}_j\ast ( \epsilon_J b_{j-s}\chi_{\frac{1}{2}J})(x)\chi_{J^\iota}(x)\\
    &=\sum_{{J \in \mathcal{I}_{s,\iota}^{\#,n} \atop l(J)=2^j}}\sum_{Q\in\mathcal{Q}_{j-s}} \mathcal{K}_j\ast (\epsilon_J b_{Q}\chi_{\frac{1}{2}J})(x)\chi_{J^\iota}(x).
  \end{split}
\Ee
The cancelation property of $b_Q$ is important for our later proof. Therefore we need to remove the characteristic function $\chi_{\frac{1}{2}J}$ associated to the bad function $b_Q$.  Notice that we only consider these dyadic cubes $Q\in\mathcal{Q}_{j-s}$ such that $Q\cap\fr12J\neq\emptyset$. Since $s$ is larger than 200, $Q\in\mathfrak{D}$ with $l(Q)=2^{j-s}$ and $J\in\mathfrak{D}^{\overrightarrow{w}}$ with $l(J)=2^j$, we  have  $Q \subseteq \frac{1}{2}J$ in view  of a simple geometry observation.
Hence we get
  \begin{align}\label{e:27TJQ}
    \sum_{J \in \mathcal{I}_{s,\iota}^{\#,n} \atop l(J)=2^j}\epsilon_J T_J b_{j-s}(x)\chi_{J^\iota}(x)  = \sum_{J\in \mathcal{I}_{s,\iota}^{\#,n} \atop l(J)=2^j} \sum_{ Q \in  \mathcal{Q}_{j-s} \atop  Q\subseteq \frac{1}{2}J} \epsilon_J T_j b_{Q}(x)\chi_{J^\iota}(x) .
  \end{align}
To simply our notation, let $\mathfrak{Q}$ be given by
\Bes
\mathfrak{Q}=\bigcup_{j\in\Z}\mathfrak{Q}_{j-s}
\Ees
where
\Bes
\mathfrak{Q}_{j-s}=\Big\{Q\in \mathcal{Q}_{j-s}:\ Q\subseteq\fr12 J\ \text{where}\ J \in \mathcal{I}_{s,\iota}^{\#,n},\ l(J)=2^j\Big\}.
\Ees

Notice that by the Calder\'on-Zygmund decomposition property (cz-iv),  $\mathfrak{Q}$ is a collection of  dyadic cubes with  disjoint interiors and $\mathfrak{Q}_{j-s}=\{Q\in\mathfrak{Q}:l(Q)=2^{j-s}\}$. Hence the right side of \eqref{e:27TJQ} equals to
\Be\label{e:27TJQ2}
\sum_{ Q \in  \mathcal{Q}_{j-s}}\sum_{J\in \mathcal{I}_{s,\iota}^{\#,n},l(J)=2^j\atop  Q\subseteq \frac{1}{2}J}\epsilon_J T_j b_{Q}(x)\chi_{J^\iota}(x) = \sum_{Q\in\mathfrak{Q}_{j-s}}\epsilon_J T_j b_{Q}(x)\chi_{J^\iota}(x),
\Ee
where the above equality follows from that for a fixed $Q\in\mathcal{Q}_{j-s}$, there exists at most one dyadic cube $J\in\mathcal{I}_{s,\iota}^{\#,n}$ with fixed sidelength $l(J)=2^j$ such that $Q\subseteq\frac{1}{2}J$. Moreover for a fixed $Q\in\mathfrak{Q}_{j-s}$, $J$ is determined by $Q$ so we get $\epsilon_J$ and $\chi_{J^\iota}$ are well defined.
From \eqref{e:27TJQ1}, \eqref{e:27TJQ} and \eqref{e:27TJQ2}, we conclude that
\begin{align*}
    \sum_{J \in \mathcal{I}_{s,\iota}^{\#,n} \atop l(J)=2^j}\epsilon_J T_J b_{j-s}(x)\chi_{J^\iota}(x) &=\sum_{Q\in\mathfrak{Q}_{j-s}}\epsilon_J T_j b_{Q}(x)\chi_{J^\iota}(x).
\end{align*}

Here we also present some useful observations for the sum of $\|b_Q\|_{L^1(\R^d)}$.
Recall $F_{s,\iota}^{n-1}$ can be rewritten as a union of the maximal cubes: $F_{s,\iota}^{n-1}=\bigcup_{J^\iota\in\mathcal{Q}_{s,\iota,n-1}}J^\iota$. If setting $\tilde{F}_{s,\iota}^{n-1}=\bigcup_{J^\iota\in\mathcal{Q}_{s,\iota,n-1}}J$, then in the proof of Lemma \ref{l:27Fsn} we have shown that $|\tilde{F}_{s,\iota}^{n-1}|\leq 2^{d}|{F}_{s,\iota}^{n-1}|$. Therefore
\Be\label{e:27tildeFn}
\Big|\bigcup_{J\in \mathcal{I}_{s,\iota}^{\#,n}}J\Big|\leq \Big|\bigcup_{J^\iota\in\mathcal{Q}_{s,\iota,n-1}}J|=|\tilde{F}_{s,\iota}^{n-1}|\leq 2^{d}|{F}_{s,\iota}^{n-1}|
\Ee
where in the first inequality we use the fact every $J\in\mathcal{I}_{s,\iota}^{\#,n}$ satisfies $J^\iota\subseteq F_{s,\iota}^{n-1}$ and $F_{s,\iota}^{n-1}$ can be rewritten as a union of the disjoint cubes $F_{s,\iota}^{n-1}=\bigcup_{J^\iota\in\mathcal{Q}_{s,\iota,n-1}}J^\iota$.
By the property (cz-iii) in the Calder\'on-Zygmund decomposition,  we obtain for all $n\geq1$
  \begin{equation}\label{e:27sumbQ}
  \begin{split}
      \sum_{Q\in\mathfrak{Q}}\|b_Q\|_{L^1(\mathbb{R}^d)}
    &\lesssim \lam\C_\Omega^{-1}\sum_{j\in\Z}\sum_{ Q\in \mathfrak{Q}_{j-s}}| Q|\\
    &\leq\lam\C_\Omega^{-1}\min\Big\{\Big|\bigcup_{Q\in\mathcal{Q}}Q\Big|, \Big|\bigcup_{J\in \mathcal{I}_{s,\iota}^{\#,n}}J\Big|\Big\}\\
    &\lc\lam\C_\Omega^{-1}\min\Big\{\Big|\bigcup_{Q\in\mathcal{Q}}Q\Big|,|F_{s,\iota}^{n-1}|\Big\}\\
    &\lc 2^{-2n}\|f\|_{L^1(\mathbb{R}^d)}
  \end{split}
  \end{equation}
where the second inequality follows from that for a fixed $Q\in\mathfrak{Q}_{j-s}$, there exists unique dyadic cube $J\in\mathcal{I}_{s,\iota}^{\#,n}$ with fixed sidelength $l(J)=2^j$ such that $Q\subseteq\frac{1}{2}J$ and  all dyadic cubes in $\mathfrak{Q}$ are disjoint,  while in the last two inequalities we use  the estimate \eqref{e:27tildeFn}, the property (cz-v) and Lemma \ref{l:27Fsn}.

Now we can restate the claim \eqref{e:e11} as follows.

\begin{proposition}\label{l:27finall2}
With all notions above, there exists a positive constant $\del$ such that
  \begin{equation*}
    \Big\|\sum_{j\in\Z}\sum_{Q\in\mathfrak{Q}_{j-s}}\epsilon_J (T_j b_{Q})\chi_{J^\iota}\Big\|_{L^2(\mathbb{R}^d)} \lesssim 2^{-n}\big(2^{-\delta{s}}\lam\C_\Om\|f\|_{L^1(\mathbb{R}^d)}\big)^{\fr12}.
  \end{equation*}
\end{proposition}

\vskip0.24cm

\subsection{Some approximations and microlocal decomposition}\label{s:27submic}\quad
\vskip0.24cm

Since $\chi_{J^\iota}$ is a characteristic function which is not smooth at the boundary $\pari(J^\iota)$, we need to make a smooth approximation of $\chi_{J^\iota}$. Let $\varpi$ be a nonnegative, radial $C_c^\infty$ function which is supported in $\{|x|\leq2^{-5}\}$ and $\int_{\R^d}\varpi(x)dx=1$. Set $\varpi_j(x)=2^{-jd}\varpi(2^{-j}x)$. Define the operator $P_j$ by
$$P_jg(x)=\int_{\R^d}\varpi_{j}(x-z)g(z)dz.$$
In particular, define $\tilde{\chi}_{J^\iota}(x)=P_{j-s\varkappa}[\chi_{J^\iota}](x)$
where $\varkappa\in(0,1)$ is a small constant to be chosen later. By the definition of $\tilde{\chi}_{J^\iota}(x)$, we have
\Be\label{e:27parix}
|\pari_x^\alp \tilde{\chi}_{J^\iota}(x)|\lc 2^{-(j-s\varkappa)|\alp|}.
\Ee

\medskip

Next, to deal with the rough kernel, we will employ the modified microlocal decomposition method from Seeger \cite{See96}.
We begin by constructing a microlocal decomposition of the kernel. Fix an integer $s\geq200$ and consider a collection of unit vectors $\Theta_s=\{e^s_v\}_v$  satisfying:

(a)\ Separation condition: $|e^s_v-e^s_{v'}|\geq 2^{-s\ga-4}$  if $v\neq v'$;

(b)\ Covering property: If $\theta\in S^{d-1}$, there exists an $e^s_v$ such that $|e^s_v-\theta|\leq 2^{-s\ga-4}$.

Here the constant $\ga$ in (a) and (b) satisfies $0<\ga<\varkappa<1$  which will be specified later. Such a collection $\Theta_s$ can be obtained by taking a maximal set satisfying condition (a) and then condition (b) will hold automatically. Notice that there are $C2^{s\ga(d-1)}$
elements in the collection $\{e^s_v\}_v$. For every $\tet\in S^{d-1}$,
there only exists finite $e^s_v$ such that $|e^s_v-\tet|\leq2^{-s\gamma-4}$.
Therefore we can construct disjoint measurable sets $E^s_v\subseteq S^{d-1}$ such that $e^s_v\in E^s_v$, $\diam(E^s_v)\leq2^{-s\gamma-2}$ and   $\bigcup_{v}E^{s}_v=S^{d-1}$.
We note that, in contrast to the original microlocal decomposition in \cite{See96},  an additional parameter $\gamma$ is introduced here. This parameter will be chosen sufficiently small and play a crucial role in the subsequent interpolation argument.

\medskip

Recall that the kernel of $T_j$ is $\mathcal{K}_j$. Based the above microlocal decomposition, we define an operator $T_j^{v}$ by
\Be\label{e:27Def T_j^{s,v}}
T_j^{v}g(x)=\int_{\R^d}\K_j^{v}(x-y)g(y)dy
\Ee
where $\K_{j}^v(x)=\K_j(x)\chi_{E^s_v}(x/|x|)$.
Hence we have
$T_j=\sum\limits_vT_j^{v}.$
In the frequency space  we need to separate the phase into different directions so we define a Fourier multiplier operator by
$$\widehat{G^s_{v}g}(\xi)=\Phi(2^{s\gamma}\inn{e^s_v}{{\xi}/{|\xi|}})\hat{g}(\xi),$$
where $\Phi$ is a smooth, nonnegative, radial function such that
$0\leq\Phi(x)\leq1$ and $\Phi(x)=1$ on $|x|\leq2$, $\Phi(x)=0$ on $|x|>4$.

In the following, we split $\epsilon_J (T_j b_{Q})\chi_{J^\iota}$ into four parts:
\begin{equation*}
\begin{split}
  \epsilon_J (T_j b_{Q})\chi_{J^\iota}&=\epsilon_J (T_j b_{Q})(\chi_{J^\iota}-\tilde{\chi}_{J^\iota})+P_{j-s\varkappa}[ \epsilon_J(T_j b_{Q})\tilde{\chi}_{J^\iota}]\\
&\quad+\sum_v(I-P_{j-s\varkappa})G^s_v[\epsilon_J(T^v_j b_{Q})\tilde{\chi}_{J^\iota}]\\
&\quad+\sum_v(I-P_{j-s\varkappa})(I-G^s_v)[\epsilon_J(T^v_j b_{Q})\tilde{\chi}_{J^\iota}].
\end{split}
\end{equation*}

Consequently, Proposition \ref{l:27finall2} follows directly from the subsequent four lemmas.

\begin{lemma}\label{l:27finall21}
With all notions above, there exists a positive constant $\del_1$ such that
  \begin{equation*}
    \Big\|\sum_{j\in\Z}\sum_{Q\in\mathfrak{Q}_{j-s}}\epsilon_J (T_j b_{Q})(\chi_{J^\iota}-\tilde{\chi}_{J^\iota})\Big\|_{L^2(\mathbb{R}^d)} \lesssim 2^{-n}\big(2^{-\delta_1{s}}\lam\C_\Om\|f\|_{L^1(\mathbb{R}^d)}\big)^{\fr12}.
  \end{equation*}
\end{lemma}

\begin{lemma}\label{l:27finall22}
With all notions above, there exists a positive constant $\del_2$ such that
  \begin{equation*}
    \Big\|\sum_{j\in\Z}\sum_{Q\in\mathfrak{Q}_{j-s}}P_{j-s\varkappa}[ \epsilon_J(T_j b_{Q})\tilde{\chi}_{J^\iota}]\Big\|_{L^2(\mathbb{R}^d)} \lesssim 2^{-n}\big(2^{-\delta_2{s}}\lam\C_\Om\|f\|_{L^1(\mathbb{R}^d)}\big)^{\fr12}.
  \end{equation*}
\end{lemma}

\begin{lemma}\label{l:27finall23}
With all notions above, there exists a positive constant $\del_3$ such that
  \begin{equation*}
    \Big\|\sum_{j\in\Z}\sum_{Q\in\mathfrak{Q}_{j-s}}\sum_v(I-P_{j-s\varkappa})G^s_v[\epsilon_J(T^v_j b_{Q})\tilde{\chi}_{J^\iota}]
\Big\|_{L^2(\mathbb{R}^d)} \lesssim 2^{-n}\big(2^{-\delta_3{s}}\lam\C_\Om\|f\|_{L^1(\mathbb{R}^d)}\big)^{\fr12}.
  \end{equation*}
\end{lemma}

\begin{lemma}\label{l:27finall24}
With all notions above, there exists a positive constant $\del_4$ such that
  \begin{equation*}
  \begin{split}
    \Big\|\sum_{j\in\Z}\sum_{Q\in\mathfrak{Q}_{j-s}}\sum_v(I-P_{j-s\varkappa})(I-G^s_v)&[\epsilon_J(T^v_j b_{Q})\tilde{\chi}_{J^\iota}]
\Big\|_{L^2(\mathbb{R}^d)}\\
& \lesssim 2^{-n}\big(2^{-\delta_4{s}}\lam\C_\Om\|f\|_{L^1(\mathbb{R}^d)}\big)^{\fr12}.
  \end{split}\end{equation*}
\end{lemma}

We first give the proofs of Lemma \ref{l:27finall21}, Lemma \ref{l:27finall22} and Lemma \ref{l:27finall23} in this section. The proof of Lemma \ref{l:27finall24} is very long and will be presented in the next section.

Notice we need to establish some $L^2$ estimates with exponential bounds for Lemma \ref{l:27finall21}, Lemma \ref{l:27finall22}, Lemma \ref{l:27finall23} and Lemma \ref{l:27finall24}. Except Lemma \ref{l:27finall23}, both our strategies are to make an interpolation between an $L^1$ estimate with a nice  decay bound and an $L^3$ estimate with a trivial bound.

\vskip0.24cm

\subsection{Some trivial bounds for $L^3$ estimates} In this subsection, we first give $L^3$ estimates for these operators appear in  Lemma \ref{l:27finall21}, Lemma \ref{l:27finall22} and Lemma \ref{l:27finall24}.

\begin{lemma}\label{l:27L^31}
With all notions above, we have
\Bes
\begin{split}
\Big\|\sum_j\sum_{Q\in\mathfrak{Q}_{j-s}}|T_jb_Q|\Big\|_{L^3(\R^d)}\lc\|\Om\|_\infty\Big(\lambda^2\C_\Omega^{-2}\sum_{Q\in\mathfrak{Q}}\|b_Q\|_{L^1(\R^d)}\Big)^{1/3}.
\end{split}\Ees
  \end{lemma}
\begin{proof}
Let $\mathfrak{B}_{j-s}=\sum_{Q\in\mathfrak{Q}_{j-s}}|b_Q|$. Rewrite
\Be\label{e:27sumv}
\begin{split}
\Big\|\sum_j\sum_{Q\in\mathfrak{Q}_{j-s}}|T_jb_Q|\Big\|^3_{L^3(\R^d)}
&=\int\Big|\sum_j|\mathcal{K}_j|*\mathfrak{B}_{j-s}(x)\Big|^3dx\\
&\leq 3!\sum_{j_1\geq j_2\geq j_3}\int\prod_{t=1}^{3}|\mathcal{K}_{j_t}|*\mathfrak{B}_{j_t-s}(x)dx\\
&\lc\sum_{j_1\geq j_2\geq j_3}\int\prod_{t=1}^{3}\int|\mathcal{K}_{j_t}(x-y_t)\mathfrak{B}_{j_t-s}(y_t)|dy_t dx.
\end{split}\Ee
By changing the order of integration, the last term above equals to
\Be\label{e:27prod}
\sum_{j_1\geq j_2\geq j_3}\iiint\Big[\int\prod_{t=1}^{3}|\mathcal{K}_{j_t}(x-y_t)|dx\Big]
\mathfrak{B}_{j_1-s}(y_1)\mathfrak{B}_{j_2-s}(y_2)\mathfrak{B}_{j_3-s}(y_3)dy_3dy_2dy_1.
\Ee

Using the supports of $\mathcal{K}_{j_t}(x-y_t)$ for $t=1,2,3$, we get $|y_3-y_2|\leq|x-y_2|+|x-y_3|\lc2^{j_2}+2^{j_3}\lc2^{j_2}$ and $|y_2-y_1|\lc2^{j_1}$. So it is easy to see that
\Bes
|\mathcal{K}_{j_1}(x-y_1)|\lc\|\Omega\|_\infty2^{-j_1d}\chi_{\{|y_1-y_2|\lc2^{j_1}\}};
\Ees
\Bes
|\mathcal{K}_{j_2}(x-y_2)|\lc\|\Omega\|_\infty2^{-j_2d}\chi_{\{|y_2-y_3|\lc2^{j_2}\}}
\Ees
and
\Bes
\int\prod_{t=1}^{3}|\mathcal{K}_{j_t}(x-y_t)|dx\lc\|\Omega\|^3_\infty2^{-(j_2+j_1)d}\chi_{\{|y_1-y_2|\lc2^{j_1}\}}\chi_{\{|y_2-y_3|\lc2^{j_2}\}}.
\Ees
Therefore by the preceding inequality,
we obtain that \eqref{e:27prod} is majorized by
\Be\label{e:27prodwithout}
\begin{split}
\|\Omega\|^3_\infty\sum_{j_1\geq j_2\geq j_3}2^{-(j_2+j_1)d}&\iiint\chi_{\{|y_1-y_2|\lc2^{j_1}\}}\chi_{\{|y_2-y_3|\lc2^{j_2}\}}\\
&\times
\mathfrak{B}_{j_1-s}(y_1)\mathfrak{B}_{j_2-s}(y_2)\mathfrak{B}_{j_3-s}(y_3)dy_3dy_2dy_1.
\end{split}\Ee

We first consider the integration over $y_3$ and the sum over $j_3$ in \eqref{e:27prodwithout} while fix $y_2,y_1$ and $j_2,j_1$. Then we derive
\Be\label{e:27K1ij}
\begin{split}
\sum_{j_3\leq j_2}\int_{|y_2-y_3|\leq2^{j_2}}\mathfrak{B}_{j_3-s}(y_3)dy_3
&=\sum_{j\leq j_2}\int_{|y_2-y|\lc2^{j_2}}\mathfrak{B}_{j-s}(y)dy\\
&\leq\sum_{j\leq j_2} \sum_{ Q\in \mathfrak{Q}_{j-s}} \int_{|y_2-y| \lesssim2^{j_2}}|b_ Q(y)|dy\\
&\lc\lam\C_\Om^{-1}\sum_{j\leq j_2} \sum_{ Q\in \mathfrak{Q}_{j-s}\atop dist(y_2,Q)\lc2^{j_2}} |Q|
\end{split}
\Ee
where in the first equality  we just make a change of variables $j_3\mapsto j$ and $y_3\mapsto y$, while  the last inequality follows from our assumption  $b_Q$ is supported in $Q$ and $\|b_Q\|_{L^1(\mathbb{R}^d)} \leq \lam\C_\Om^{-1}| Q|$.  Notice that $j\leq j_2$ and all cubes in $\mathfrak{Q}$ are disjoint, we obtain
  \begin{equation}\label{e:e15}
  \sum_{j\leq j_2}\sum_{ Q \in \mathfrak{Q}_{j-s} \atop dist (y_2,  Q)\lesssim 2^{j_2}} | Q| \lesssim 2^{dj_2}.
  \end{equation}
Therefore substituting \eqref{e:e15} into \eqref{e:27K1ij}, we get the following estimate
\Be\label{e:27j_3}
2^{-dj_2}\sum_{j_3\leq j_2}\int_{|y_2-y_3|\leq2^{j_2}}\mathfrak{B}_{j_3-s}(y_3)dy_3\lc\lam\C_\Om^{-1}.
\Ee

Next we consider the integration over $y_2$ and the sum over $j_2$ in \eqref{e:27prodwithout} while fix $y_1$ and $j_1$. Similar to the proof for \eqref{e:27j_3}, we could obtain
\Be\label{e:27j_2}
2^{-dj_1}\sum_{j_2\leq j_1}\int_{|y_1-y_2|\leq2^{j_1}}\mathfrak{B}_{j_2-s}(y_2)dy_2\lc\lam\C_\Om^{-1}.
\Ee

Finally we consider the integration over $y_1$ and the sum over $j_1$ in \eqref{e:27prodwithout}, then
\Be\label{e:27j_1}
\sum_{j_1\in\Z}\int\mathfrak{B}_{j_1-s}(y_1)dy_1\leq\sum_{Q\in\mathfrak{Q}}\|b_Q\|_{L^1(\R^d)}.
\Ee
Substituting these estimates \eqref{e:27j_1},
\eqref{e:27j_2} and \eqref{e:27j_3} into  \eqref{e:27prodwithout} together with \eqref{e:27prod} and \eqref{e:27sumv}, we get the desired $L^3$ estimate.
\end{proof}

\begin{lemma}\label{l:27L^32}
With all notions above, we have
\Bes
\begin{split}
\Big\|\sum_j\sum_{Q\in\mathfrak{Q}_{j-s}}P_{j-s\varkappa}[|T_jb_Q|]\Big\|_{L^3(\R^d)}\lc\|\Om\|_\infty\Big(\lambda^2\C_\Omega^{-2}\sum_{Q\in\mathfrak{Q}}\|b_Q\|_{L^1(\R^d)}\Big)^{1/3}.
\end{split}\Ees
  \end{lemma}
\begin{proof}
The proof is very similar to that of Lemma \ref{l:27L^31}. We only point out the difference here.
Notice that
$$
P_{j-s\varkappa}[|T_jb_Q|](x)\leq(P_{j-s\varkappa}|\mathcal{K}_j|)*|b_Q|(x)\lc H_j*|b_Q|(x)
$$
where $H_j(x)=2^{-jd}\chi_{\{2^{j-5}<|x|<2^{j-1}\}}\|\Om\|_\infty$. Now just replacing the kernel $|\mathcal{K}_j|$ in the proof of Lemma \ref{l:27finall21} by $H_j$ and proceeding the proof as we have done there, we could obtain the desired result for this lemma.
\end{proof}

\begin{lemma}\label{l:27L^33}
With all notions above, we have
\Bes
\begin{split}
    \Big\|\sum_{j\in\Z}\sum_{Q\in\mathfrak{Q}_{j-s}}\sum_v&(I-P_{j-s\varkappa})(I-G^s_v)[\epsilon_J(T^v_j b_{Q})\tilde{\chi}_{J^\iota}]
\Big\|_{L^3(\mathbb{R}^d)}\\
&\lc\|\Om\|_\infty2^{s\ga([\fr{d}{2}]+1)+\fr23s\gamma(d-1)}\Big(\lambda^2\C_\Omega^{-2}\sum_{Q\in\mathfrak{Q}}\|b_Q\|_{L^1(\R^d)}\Big)^{1/3}.
\end{split}\Ees
  \end{lemma}
\begin{proof}
By the triangle inequality, we get
\Bes
\begin{split}
\Big\|\sum_{j\in\Z}\sum_{Q\in\mathfrak{Q}_{j-s}}\sum_v(I-&P_{j-s\varkappa})(I-G^s_v)[\epsilon_J(T^v_j b_{Q})\tilde{\chi}_{J^\iota}]
\Big\|_{L^3(\mathbb{R}^d)}\\
&\leq\sum_v
\Big\|(I-G^s_v)\sum_{j\in\Z}\sum_{Q\in\mathfrak{Q}_{j-s}}(I-P_{j-s\varkappa})[\epsilon_J(T^v_j b_{Q})\tilde{\chi}_{J^\iota}]
\Big\|_{L^3(\mathbb{R}^d)}.
\end{split}
\Ees

Notice that the operator $(I-G^s_{v})$ is a Fourier multiplier operator with multiplier $1-\Phi(2^{s\ga}\inn{e^s_v}{\xi/|\xi|})$.
It is easy to see that $1-\Phi(2^{s\ga}\inn{e^s_v}{\xi/|\xi|})$ is bounded and
$$|\pari_{\xi}^{\alp}[1-\Phi(2^{s\ga}\inn{e^s_v}{\xi/|\xi|})]|\lc2^{s\ga([\fr{d}{2}]+1)}|\xi|^{-|\alp|}$$
for all multi indices $|\alp|\leq [\fr{d}{2}]+1$.
Then by Lemma \ref{l:27Hmihlin},  $I-G^s_{v}$ is of strong type $(3,3)$  with operator norm at most $C2^{s\ga([\fr{d}{2}]+1)}.$ Hence we get
\Be\label{e:27sumv3}
\begin{split}
\Big\|(I-G^s_v)\sum_{j\in\Z}&\sum_{Q\in\mathfrak{Q}_{j-s}}(I-P_{j-s\varkappa})[\epsilon_J(T^v_j b_{Q})\tilde{\chi}_{J^\iota}]
\Big\|_{L^3(\mathbb{R}^d)}\\
&\lc 2^{s\ga([\fr{d}{2}]+1)}\Big\|\sum_{j\in\Z}\sum_{Q\in\mathfrak{Q}_{j-s}}(I-P_{j-s\varkappa})[\epsilon_J(T^v_j b_{Q})\tilde{\chi}_{J^\iota}]
\Big\|_{L^3(\mathbb{R}^d)}.
\end{split}
\Ee

In what follows, we fix $v$. By the support of $\mathcal{K}_j^v$ and $\varpi_{j-s\varkappa}$ together with $0<\ga<\varkappa<1$, we get
\Be\label{e:27IPjs}
\begin{split}
|(I-P_{j-s\varkappa})[\epsilon_J(T^v_j b_{Q})\tilde{\chi}_{J^\iota}](x)|&\leq |\mathcal{K}_j^v|*|b_Q|(x)+(P_{j-s\varkappa}|\mathcal{K}_j^v|)*|b_Q|(x)\\
&\lc H_j^{s,v}*|b_Q|(x)
\end{split}
\Ee
where $H_j^{s,v}(x):=2^{-jd}\chi_{E^{s,v}_j}(x)\|\Om\|_\infty$ and $\chi_{E^{s,v}_j}(x)$ is a characteristic function of the set
$$E_j^{s,v}:=\{x\in \R^d:|\inn{x}{e^s_v}|\leq 2^{j-1},|x-\inn{x}{e^s_v}e^s_v|\leq 2^{j-1-s\gamma}\}.$$

Let $\mathfrak{B}_{j-s}=\sum_{Q\in\mathfrak{Q}_{j-s}}|b_Q|$. Rewrite
\Be\label{e:27prod3}
\begin{split}
&\quad\Big\|\sum_{j\in\Z}\sum_{Q\in\mathfrak{Q}_{j-s}}(I-P_{j-s\varkappa})[\epsilon_J(T^v_j b_{Q})\tilde{\chi}_{J^\iota}]
\Big\|^3_{L^3(\mathbb{R}^d)}\\
&\lc\int\Big|\sum_jH^{s,v}_j*\mathfrak{B}_{j-s}(x)\Big|^3dx
\leq 3!\sum_{j_1\geq j_2\geq j_3}\int\Big[\prod_{t=1}^{3} H^{s,v}_{j_t}*\mathfrak{B}_{j_t-s}(x)\Big]dx\\
&\lc\sum_{j_1\geq j_2\geq j_3}\int\Big[\prod_{t=1}^{3}\int H^{s,v}_{j_t}(x-y_t)\mathfrak{B}_{j_t-s}(y_t)dy_t\Big] dx.
\end{split}\Ee
By changing the order of integration, the last term above equals to
\Bes
\sum_{j_1\geq j_2\geq j_3}\iiint\Big[\int\prod_{t=1}^{3}H^{s,v}_{j_t}(x-y_t)dx\Big]
\mathfrak{B}_{j_1-s}(y_1)\mathfrak{B}_{j_2-s}(y_2)\mathfrak{B}_{j_3-s}(y_3)dy_3dy_2dy_1.
\Ees

Utilizing the supports of $H^{s,v}_{j_t}(x-y_t)$ for $t=1,2,3$, we get $|y_3-y_2|\lc2^{j_2}$ and $|y_2-y_1|\lc2^{j_1}$. So it is easy to see that
\Bes
\int\prod_{t=1}^{3}H^{s,v}_{j_t}(x-y_t)dx\lc\|\Omega\|^3_\infty2^{-s\gamma(d-1)}2^{-(j_2+j_1)d}\chi_{\{|y_1-y_2|\lc2^{j_1}\}}\chi_{\{|y_2-y_3|\lc2^{j_2}\}}.
\Ees
Therefore by the preceding inequality,
we obtain that \eqref{e:27prod3} is bounded by
\Bes
\begin{split}
\|\Omega\|^3_\infty2^{-s\gamma(d-1)}\sum_{j_1\geq j_2\geq j_3}2^{-(j_2+j_1)d}&\iiint\chi_{\{|y_1-y_2|\lc2^{j_1}\}}\chi_{\{|y_2-y_3|\lc2^{j_2}\}}\\
&\times\mathfrak{B}_{j_1-s}(y_1)\mathfrak{B}_{j_2-s}(y_2)\mathfrak{B}_{j_3-s}(y_3)dy_3dy_2dy_1.
\end{split}\Ees
Notice that this estimate is the same as \eqref{e:27prodwithout} in Lemma \ref{l:27L^31}. Combining the estimate for \eqref{e:27prodwithout}, together with \eqref{e:27prod3}, \eqref{e:27sumv3} and $\card(\Theta_s)\lc2^{s\ga(d-1)}$, we get the desired $L^3$ estimate.
\end{proof}
\vskip0.24cm

\subsection{Proof of Lemma \ref{l:27finall21}}
To establish Lemma \ref{l:27finall21}, the key is to show the following $L^1$ estimate with an exponential decay in $s$: For a fix $j\in\Z$ and $Q\in\mathfrak{Q}_{j-s}$,
\Be\label{e:27L1chi}
    \|(T_j b_{Q})(\chi_{J^\iota}-\tilde{\chi}_{J^\iota})\|_{L^1(\mathbb{R}^d)} \lesssim 2^{-s\varkappa}\|\Omega\|_\infty\|b_Q\|_{L^1(\R^d)}.
\Ee

We first prove \eqref{e:27L1chi}. Write $(T_j b_{Q})(x)[\chi_{J^\iota}(x)-\tilde{\chi}_{J^\iota}(x)]$ as
\Bes
\begin{split}
\int\mathcal{K}_j(x-y)b_Q(y)dy\int[\chi_{J^\iota}(x)-\chi_{J^\iota}(x-z)]
\varpi_{j-s\varkappa}(z)dz.
\end{split}
\Ees
Utilizing the Minkowski inequality and the Fubini theorem, we obtain
\Bes
\begin{split}
   &\quad\|(T_jb_{Q})[\chi_{J^\iota}-\tilde{\chi}_{J^\iota}]\|_{L^1(\mathbb{R}^d)}\\
     &\lc\int|b_Q(y)|\Big(\iint|\mathcal{K}_j(x-y)|\cdot|\chi_{J^\iota}(x)-\chi_{J^\iota}(x-z)|\cdot|\varpi_{j-s\varkappa}(z)|dzdx\Big)dy.
\end{split}
\Ees
Using the support of $\varpi_{j-s\varkappa}$, we get $|z|\leq2^{j-5-s\varkappa}$. By the support of $[\chi_{J^\iota}-\tilde{\chi}_{J^\iota}]$, only two cases happen: one is $x\in J^\iota$ and $x-z\in(J^\iota)^c$, the other is $x\in (J^\iota)^c$ and $x-z\in J^\iota$. Recall $J^{\iota}$ is a dyadic cube with sidelength $2^{j-1}$. Since $|z|<2^{j-5-s\varkappa}$, we can obtain that in both cases  $\dist\{x,\pari J^{\iota}\}\lc2^{j-s\varkappa}$ from a geometry observation. Hence we get
\Bes
\begin{split}
   \|(T_jb_{Q})[\chi_{J^\iota}-\tilde{\chi}_{J^\iota}]\|_{L^1(\mathbb{R}^d)}
     &\lc2^{-jd}\|\Om\|_\infty\int_{\dist\{x,\pari J^{\iota}\}\lc2^{j-s\varkappa}}dx\|b_Q\|_{L^1(\R^d)}\\
     &\lc2^{-s\varkappa}\|\Om\|_\infty\|b_Q\|_{L^1(\R^d)}
\end{split}
\Ees
which completes the proof of \eqref{e:27L1chi}.

On one hand, utilizing the triangle inequality and \eqref{e:27L1chi}, we get
\Be\label{e:27L1chi1}
\begin{split}
\Big\|\sum_{j\in\Z}\sum_{Q\in\mathfrak{Q}_{j-s}}&\epsilon_J (T_j b_{Q})(\chi_{J^\iota}-\tilde{\chi}_{J^\iota})\Big\|_{L^1(\mathbb{R}^d)} \\ &\lesssim\sum_{j\in\Z}\sum_{Q\in\mathfrak{Q}_{j-s}}\|(T_jb_{Q})[\chi_{J^\iota}-\tilde{\chi}_{J^\iota}]\|_{L^1(\mathbb{R}^d)}\\
&\lc2^{-s\varkappa}\|\Om\|_\infty\sum_{Q\in\mathfrak{Q}}\|b_Q\|_{L^1(\R^d)}.
\end{split}
\Ee

On the other hand, by Lemma \ref{l:27L^31}, we have
\Be\label{e:27L1chi2}
\begin{split}
\Big\|\sum_{j\in\Z}\sum_{Q\in\mathfrak{Q}_{j-s}}\epsilon_J (T_j b_{Q})(\chi_{J^\iota}-\tilde{\chi}_{J^\iota})\Big\|_{L^3(\mathbb{R}^d)} &\lc\Big\|\sum_{j\in\Z}\sum_{Q\in\mathfrak{Q}_{j-s}} |T_j b_{Q}|\Big\|_{L^3(\mathbb{R}^d)}\\
&\lesssim\|\Om\|_\infty\Big(\lambda^2\C_\Omega^{-2}\sum_{Q\in\mathfrak{Q}}\|b_Q\|_{L^1(\R^d)}\Big)^{1/3}.\end{split}
\Ee

Making an interpolation between \eqref{e:27L1chi1} and \eqref{e:27L1chi2}, we get
\Be\label{e:27L1chi3}
\begin{split}
\Big\|\sum_{j\in\Z}\sum_{Q\in\mathfrak{Q}_{j-s}}&\epsilon_J (T_j b_{Q})(\chi_{J^\iota}-\tilde{\chi}_{J^\iota})\Big\|_{L^2(\mathbb{R}^d)} \\ &\lc2^{-\fr14s\varkappa}\|\Om\|_\infty\Big(\lam\C_\Omega^{-1}\sum_{Q\in\mathfrak{Q}}\|b_Q\|_{L^1(\R^d)}\Big)^{\fr12}.
\end{split}
\Ee

Finally, combining \eqref{e:27L1chi3}, \eqref{e:27sumbQ} and $\|\Om\|_\infty\leq2^{s\eta}\|\Om\|_1$, we obtain
\Bes
\begin{split}
\Big\|\sum_{j\in\Z}\sum_{Q\in\mathfrak{Q}_{j-s}}&\epsilon_J (T_j b_{Q})(\chi_{J^\iota}-\tilde{\chi}_{J^\iota})\Big\|_{L^2(\mathbb{R}^d)} \\ &\lc2^{-\fr14\varkappa s+\eta s}2^{-n}\Big(\lam\C_\Omega\|f\|_{L^1(\R^d)}\Big)^{\fr12}
\end{split}
\Ees
 which completes the proof of Lemma \ref{l:27finall21} if we choose  constants $0<\eta<\fr14\varkappa$ and set $\del_1=2(\fr14\varkappa-\eta)$.
$\hfill{} \Box$
\vskip0.24cm

\subsection{Proof of Lemma \ref{l:27finall22}}
We first show the following $L^1$ estimate: For a fix $j\in\Z$ and $Q\in\mathfrak{Q}_{j-s}$,
\Be\label{e:27L1Pjs}
    \|P_{j-s\varkappa}[(T_j b_{Q})\tilde{\chi}_{J^\iota}]\|_{L^1(\mathbb{R}^d)} \lesssim 2^{-s(1-\varkappa)}\|\Omega\|_\infty\|b_Q\|_{L^1(\R^d)}.
\Ee

Utilizing the Fubini theorem, we write
\Bes
P_{j-s\varkappa}[(T_j b_{Q})\tilde{\chi}_{J^\iota}](x)
=\int b_Q(y)\Big[\int\varpi_{j-s\varkappa}(x-w)\mathcal{K}_j(w-y)\tilde{\chi}_{J^\iota}(w)dw\Big]dy.
\Ees
Let $y_0$ the center of $Q$. By making a change of variables $w-y=z$ and using the cancelation property of $b_Q$  and $\supp(b_Q)\subseteq Q$ (see (cz-ii)), the above integral equals to
\Bes
\int b_Q(y) \mathcal{K}_{j,s}^{x,\varkappa}(y) dy=\int_Q b_Q(y)[\mathcal{K}_{j,s}^{x,\varkappa}(y)-\mathcal{K}_{j,s}^{x,\varkappa}(y_0)]dy
\Ees
where
\Bes
\mathcal{K}_{j,s}^{x,\varkappa}(y)=\int\varpi_{j-s\varkappa}(x-y-z)\mathcal{K}_j(z)\tilde{\chi}_{J^\iota}(y+z)dz.
\Ees
By employing the mean value formula
$$
\mathcal{K}_{j,s}^{x,\varkappa}(y)-\mathcal{K}_{j,s}^{x,\varkappa}(y_0)=\int_0^1\inn{y-y_0}{\nabla [\mathcal{K}_{j,s}^{x,\varkappa}](ty+(1-t)y_0)}dt
$$
and $|y-y_0|\lc2^{j-s}$ for any $y\in Q$, we get
\Bes
\begin{split}
\|P_{j-s\varkappa}[(T_j b_{Q})\tilde{\chi}_{J^\iota}]\|_{L^1(\mathbb{R}^d)}
&\lc2^{j-s}\int_Q|b_Q(y)|\int|\mathcal{K}_j(z)|\Big[\|\nabla \varpi_{j-s\varkappa}\|_{L^1(\R^d)}\tilde{\chi}_{J^\iota}(y+z)\\
&\quad+\|\varpi_{j-s\varkappa}\|_{L^1(\R^d)}\int_0^1|\nabla \tilde{\chi}_{J^\iota}(ty+(1-t)y_0)|dt\Big]dzdy\\
&\lc2^{-(1-\varkappa)s}\|\Om\|_\infty\|b_Q\|_{L^1(\R^d)}
\end{split}
\Ees
where we also use $\|\nabla \varpi_{j-s\varkappa}\|_{L^1(\R^d)}\lc2^{-j+s\varkappa}$ and $\|\nabla\tilde{\chi}_{J^\iota}\|_{L^\infty(\R^d)}\lc2^{-j+s\varkappa}$ (see \eqref{e:27parix}). So we prove \eqref{e:27L1Pjs}.

On one hand, by the triangle inequality and \eqref{e:27L1Pjs}, we derive
\Be\label{e:27Ipjs1}
\begin{split}
\Big\|\sum_{j\in\Z}\sum_{Q\in\mathfrak{Q}_{j-s}}P_{j-s\varkappa}[ \epsilon_J(T_j b_{Q})\tilde{\chi}_{J^\iota}]\Big\|_{L^1(\mathbb{R}^d)}
&\leq \sum_{j\in\Z}\sum_{Q\in\mathfrak{Q}_{j-s}}\Big\|P_{j-s\varkappa}[ (T_j b_{Q})\tilde{\chi}_{J^\iota}]\Big\|_{L^1(\mathbb{R}^d)}\\
&\lc2^{-(1-\varkappa)s}\|\Om\|_\infty\sum_{Q\in\mathfrak{Q}}\|b_Q\|_{L^1(\R^d)}.
\end{split}
\Ee

On the other hand, by the triangle inequality and Lemma \ref{l:27L^32}, we obtain
\Be\label{e:27Ipjs2}
\begin{split}
\Big\|\sum_{j\in\Z}\sum_{Q\in\mathfrak{Q}_{j-s}}P_{j-s\varkappa}[\epsilon_J (T_j b_{Q})\tilde{\chi}_{J^\iota}]\Big\|_{L^3(\mathbb{R}^d)}&
\leq \Big\|\sum_{j\in\Z}\sum_{Q\in\mathfrak{Q}_{j-s}}P_{j-s\varkappa}[ |T_j b_{Q}|]\Big\|_{L^3(\mathbb{R}^d)}\\
&\lc\|\Om\|_\infty\Big(\lambda^2\C_\Omega^{-2}\sum_{Q\in\mathfrak{Q}}\|b_Q\|_{L^1(\R^d)}\Big)^{1/3}.
\end{split}
\Ee

Making an interpolation between \eqref{e:27Ipjs1} and \eqref{e:27Ipjs2}, together with \eqref{e:27sumbQ} and $\|\Om\|_\infty\leq 2^{\eta s}\|\Om\|_1$, we get
\Bes
    \Big\|\sum_{j\in\Z}\sum_{Q\in\mathfrak{Q}_{j-s}}P_{j-s\varkappa}[\epsilon_J (T_j b_{Q})\tilde{\chi}_{J^\iota}]\Big\|_{L^2(\mathbb{R}^d)} \lesssim 2^{-\fr14(1-\varkappa){s}+\eta s}2^{-n}\big(\lam\C_\Om\|f\|_{L^1(\mathbb{R}^d)}\big)^{\fr12}
\Ees
which completes the proof of Lemma \ref{l:27finall22} if we choose $0<\eta<\fr14(1-\varkappa)$ and set $\del_2=\fr12(1-\varkappa)-2\eta$.
$\hfill{} \Box$
\vskip0.24cm

\subsection{Proof of Lemma \ref{l:27finall23}}

We straightforwardly use $TT^*$ argument to deal with this lemma. We begin by stating an orthogonality property concerning the support of $\mathcal{F}(G^s_{v})$: For a fixed $s\geq 200$,
\begin{equation}\label{e:27obser}
\sup\limits_{\xi\neq0}\sum\limits_{e^s_v\in\Theta_{s}}|\Phi^2(2^{s\ga}\inn{e^s_v}{\xi/|\xi|})|\lc2^{s\ga(d-2)}.
\end{equation}
To verify this, note that by the homogeneity of  $\Phi^2(2^{s\ga}\inn{e^s_v}{\xi/|\xi|})$, it is sufficient to take the supremum over the surface $S^{d-1}$. For $|\xi|=1$ and $\xi\in\supp[\Phi^2(2^{s\ga}\inn{e^s_v}{\xi/|\xi|})]$, let $\xi^{\bot}$ denote the hyperplane perpendicular to $\xi$. Then it is easy to see that
\begin{equation}\label{e:27e^n_v}
\text{dist}(e^s_v,\xi^\bot)\lc2^{-s\ga}.
\end{equation}
Given that the mutual distance between the vectors $e^s_v$'s is $C2^{-s\ga}$, the number of vectors satisfying \eqref{e:27e^n_v} is at most $2^{s\ga(d-2)}$. This establishes \eqref{e:27obser}.

Applying the Plancherel theorem, the Cauchy-Schwarz inequality and finally the Plancherel theorem again, we obtain
\begin{equation}\label{e:27L^2key}
\begin{split}
&\Big\|\sum\limits_{v}G^s_{v}\sum\limits_j\sum_{Q\in\mathfrak{Q}_{j-s}}(I-P_{j-s\varkappa})[\epsilon_J(T^{v}_jb_{Q})\tilde{\chi}_{J^\iota}]\Big\|^2_{L^2(\R^d)}\\
&=\int\Big|\sum\limits_{v}\Phi(2^{s\ga}\inn{e^s_v}{\xi/|\xi|})\mathcal{F}\Big(\sum\limits_j\sum_{Q\in\mathfrak{Q}_{j-s}}(I-P_{j-s\varkappa})[\epsilon_J(T^{v}_jb_{Q})\tilde{\chi}_{J^\iota}]\Big)(\xi)\Big|^2d\xi\\
&\lc2^{s\ga(d-2)} \Big\|\sum\limits_{v}\Big|\mathcal{F}\Big(\sum\limits_j\sum_{Q\in\mathfrak{Q}_{j-s}}(I-P_{j-s\varkappa})[\epsilon_J(T^{v}_jb_{Q})\tilde{\chi}_{J^\iota}]\Big)\Big|^2\Big\|_{L^1(\R^d)}\\
&\lc2^{s\ga(d-2)} \sum\limits_{v}\Big\|\sum\limits_j\sum_{Q\in\mathfrak{Q}_{j-s}}(I-P_{j-s\varkappa})[\epsilon_J(T^{v}_jb_{Q})\tilde{\chi}_{J^\iota}]\Big\|^2_{L^2(\R^d)}.\\
\end{split}
\end{equation}

In the following, we claim that for a fixed $e^s_v$,
\begin{equation}\label{e:27L^2}
\Big\|\sum\limits_j\sum_{Q\in\mathfrak{Q}_{j-s}}(I-P_{j-s\varkappa})[\epsilon_J(T^{v}_jb_{Q})\tilde{\chi}_{J^\iota}]\Big\|^2_{L^2(\R^d)}\\
\lc2^{-2s\ga(d-1)+2s\eta-2n}\lam\C_\Om\|f\|_{L^1(\R^d)}.
\end{equation}
Using this estimate, $\card(\Theta_s)\lc2^{s\ga(d-1)}$ and (\ref{e:27L^2key}), we get
\begin{equation*}
\Big\|\sum\limits_{v}\sum\limits_j\sum_{Q\in\mathfrak{Q}_{j-s}}(I-P_{j-s\varkappa})G^s_{v}[\epsilon_J(T^{v}_jb_{Q})\tilde{\chi}_{J^\iota}]\Big\|^2_{L^2(\R^d)}\\
\lc2^{-s\ga+2s\eta-2n}\lam\C_\Omega\|f\|_{L^1(\R^d)},
\end{equation*}
which is just the desired bound of Lemma \ref{l:27finall23} if we choose $0<\eta<\fr12\ga$ and set $\del_3=2(\ga-2\eta)$. Thus, to finish the proof of Lemma \ref{l:27finall23}, it is enough to prove  (\ref{e:27L^2}).

Recall \eqref{e:27IPjs}, we have shown that 
\Bes
|(I-P_{j-s\varkappa})[\epsilon_J(T^v_j b_{Q})\tilde{\chi}_{J^\iota}](x)|\lc H_j^{s,v}*|b_Q|(x)
\Ees
where $H_j^{s,v}(x):=2^{-jd}\chi_{E^{s,v}_j}(x)\|\Om\|_\infty$.
Recall we also define $\mathfrak{B}_s=\sum_{Q\in\mathfrak{Q}_s}|b_Q|$. Then for a fixed $e^s_v$, we obtain
\begin{equation}\label{e:27L^2key2}
\begin{split}
\Big\|\sum\limits_j\sum_{Q\in\mathfrak{Q}_{j-s}}&(I-P_{j-s\varkappa})[\epsilon_J(T^{v}_jb_{Q})\tilde{\chi}_{J^\iota}]\Big\|^2_{L^2(\R^d)}
\lc\int_{\R^d}\Big|\sum\limits_jH^{s,v}_j*\mathfrak{B}_{j-s}(x)\Big|^2dx\\
&\leq2\sum\limits_j\sum_{i\leq j}\int_{\R^d} H_i^{n,v}*\mathfrak{B}_{i-s}(x) \cdot H_j^{s,v}*\mathfrak{B}_{j-s}(x) dx\\
&=2\sum\limits_j\sum\limits_{i\leq j}\int_{\R^d} H_j^{s,v}*H_i^{s,v}*\mathfrak{B}_{i-s}(x)\cdot\mathfrak{B}_{j-s}(x)dx.
\end{split}
\end{equation}
Observe that $\|H_i^{s,v}\|_{L^1(\R^d)}\lc2^{-id}\cdot|E_i^{s,v}|\cdot\|\Omega\|_\infty\lc2^{-s\ga(d-1)}\|\Omega\|_\infty$, therefore we get that for any $i\leq j$,
$$H_j^{s,v}*H_i^{s,v}(x)\lc 2^{-s\ga(d-1)}2^{-jd}\|\Omega\|_\infty^2\chi_{\widetilde{E}^{s,v}_j}(x),$$
where $\widetilde{E}^{s,v}_j=E^{s,v}_j+E^{s,v}_j$.
Hence for a fixed $j$, $e^s_v$ and $x$, we derive
\begin{equation}\label{e:27L^2con}
\begin{split}
\sum\limits_{i\leq j}H_j^{s,v}*H_i^{s,v}*\mathfrak{B}_{i-s}(x)&\lc2^{-s\ga(d-1)}2^{-jd}\|\Omega\|^2_\infty\sum\limits_{i\leq j}\int_{x+\widetilde{E}^{s,v}_j}\mathfrak{B}_{i-s}(y)dy\\
&\lc2^{-s\ga(d-1)}2^{-jd}\|\Omega\|^2_\infty\sum\limits_{i\leq j}\sum\limits_{Q\in\mathfrak{Q}_
{i-s}\atop Q\cap\{x+\widetilde{E}^{s,v}_j\}\neq\emptyset}\int_{Q}|b_Q(y)|dy.
\end{split}
\end{equation}
Now applying $\|\Om\|_\infty\leq2^{\eta s}\|\Om\|_1$ and the Calder\'on-Zygmund decomposition property (cz-ii): $\int|b_Q(y)|dy\lc\lam|Q|/{\C_\Om}$, the above estimate is bounded by
\begin{equation}\label{e:27L^2con2}
\begin{split}
2^{-s\ga(d-1)+2s\eta}&2^{-jd}\C_\Om^2\sum\limits_{i\leq j}\sum\limits_{Q\in\mathfrak{Q}_
{i-s}\atop Q\cap\{x+\widetilde{E}^{s,v}_j\}\neq\emptyset}{\lam}{\C_\Om^{-1}}|Q|\\
&\lc2^{-s\ga(d-1)+2s\eta}2^{-jd}2^{jd-s\ga(d-1)}{\lam}{\C_\Om}={\lam}{\C_\Om} 2^{-2s\ga(d-1)+2s\eta},
\end{split}
\end{equation}
where we also use fact that all the cubes in $\mathfrak{Q}$ are disjoint (see (cz-iv)). By (\ref{e:27L^2key2}),
 (\ref{e:27L^2con}), \eqref{e:27L^2con2} and \eqref{e:27sumbQ}, we obtain
\Bes
\begin{split}
\Big\|\sum\limits_j\sum_{Q\in\mathfrak{Q}_{j-s}}(I-P_{j-s\varkappa})[\epsilon_J(T^{v}_jb_{Q})\tilde{\chi}_{J^\iota}]\Big\|^2_{L^2(\R^d)}
&\lc\lam2^{-2s\ga(d-1)+2s\eta}\C_\Omega\sum\limits_{Q\in\mathfrak{Q}}\|b_{Q}\|_{L^1(\R^d)}\\
&\lc\lam2^{-2s\ga(d-1)+2s\eta-2n}\C_\Omega\|f\|_{L^1(\R^d)},
\end{split}
\Ees
which is the asserted bound for \eqref{e:27L^2}. Hence, we complete the proof of Lemma \ref{l:27finall23}.
$\hfill{} \Box$
\vskip0.24cm

\section{Proof of Lemma \ref{l:27finall24}}\label{s:275}

Our strategy to prove Lemma \ref{l:27finall24} is similar to that of Lemma \ref{l:27finall21} and \ref{l:27finall22}, i.e. making an interpolation between an $L^1$ estimate and an $L^3$ estimate.
To establish the $L^1$ estimate with an exponential decay in $s$, we will apply the stationary phase method to deal with some oscillatory integrals.

Let us introduce some notation. We begin by defining the Littlewood-Paley decomposition. Let $\psi$ be a $C^\infty$ function supported in $\{\xi : \frac{1}{2} \leq |\xi| \leq 2\}$ such that
$$\sum_{k\in\Z}\psi^2(2^{k}\xi)=1\ \text{ for all }\ \ \xi\in\R^d\setminus\{0\}.$$  Define $\psi_k(\xi) = \psi(2^k \xi)$. Choose $\tilde{\psi}$ to be a radial $C^\infty$ function such that $\tilde{\psi}(\xi) = 1$ for $\frac{1}{2} \leq |\xi| \leq 2$, $\operatorname{supp}(\tilde{\psi}) \subseteq \{\xi : \frac{1}{4} \leq |\xi| \leq 4\}$, and $0 \leq \tilde{\psi}(\xi) \leq 1$ for all $\xi \in \mathbb{R}^d$. Define $\tilde{\psi}_k(\xi) = \tilde{\psi}(2^k \xi)$. Then it is clear that $\psi_k = \tilde{\psi}_k \psi_k$.
Define the convolution operators $\Lambda_k$ and $\tilde{\Lambda}_k$ via Fourier multipliers $\psi_k$ and $\tilde{\psi}_k$, respectively:
$$\widehat{{\Lam}_kg}(\xi)=\psi_k(\xi)\hat{g}(\xi),\quad \ \widehat{\tilde{\Lam}_kg}(\xi)=\tilde{\psi}_k(\xi)\hat{g}(\xi).$$
By construction, we have $\Lambda_k = \tilde{\Lambda}_k \Lambda_k$ and the identity operator satisfies
$I=\sum\limits_{k\in\mathbb{Z}}\Lam_k^2.$
Hence for a fixed $j\in\Z$, $Q\in\mathfrak{Q}_{j-s}$ and $e^s_v\in\Theta_s$, we derive that
$$(I-P_{j-s\varkappa})(I-G^s_v)[\epsilon_J(T^v_j b_{Q})\tilde{\chi}_{J^\iota}]=\sum_k(I-P_{j-s\varkappa})(I-G^s_v)\Lam_k^2[\epsilon_J(T^v_j b_{Q})\tilde{\chi}_{J^\iota}].$$
Using the triangle inequality, we get
\Be\label{e:27sumk}
\begin{split}
&\quad\|(I-P_{j-s\varkappa})(I-G^s_v)[\epsilon_J(T^v_j b_{Q})\tilde{\chi}_{J^\iota}]\|_{L^1(\R^d)}\\
&\leq\sum_k\|(I-P_{j-s\varkappa})\Lam_k(I-G^s_v)\Lam_k[\epsilon_J(T^v_j b_{Q})\tilde{\chi}_{J^\iota}]\|_{L^1(\R^d)}\\
&\leq\sum_k\|(I-P_{j-s\varkappa})\Lam_k\|_{L^1(\R^d)\rta L^1(\R^d)}\|(I-G^s_v)\Lam_k[\epsilon_J(T^v_j b_{Q})\tilde{\chi}_{J^\iota}]\|_{L^1(\R^d)}.
\end{split}
\Ee

For the term $(I-P_{j-s\varkappa})\Lam_k$, we have the following estimate.
\begin{lemma}\label{l:27IPjslamk}
With all notions above, 
\Bes
\|(I-P_{j-s\varkappa})\Lam_k\|_{L^1(\R^d)\rta L^1(\R^d)}\lc\min\{1,2^{j-s\varkappa-k}\}.
\Ees
\end{lemma}
\begin{proof}
On one hand,  it is easy to see that 
  \Bes
  \|(I-P_{j-s\varkappa})\Lam_kg\|_{L^1(\R^d)}\leq\|\Lam_kg\|_{L^1(\R^d)}+\|P_{j-s\varkappa}\Lam_kg\|_{L^1(\R^d)}\lc\|g\|_{L^1(\R^d)}.
  \Ees

On the other hand, we could write $(I-P_{j-s\varkappa})\Lam_kg(x)$ as
\Bes
\begin{split}[\mathcal{F}^{-1}(\psi_k)&-\varpi_{j-s\varkappa}*\mathcal{F}^{-1}(\psi_k)]*g(x)\\
&=\int\Big[\int\big(\mathcal{F}^{-1}(\psi_k)(x-y)-\mathcal{F}^{-1}(\psi_k)(x-y-z)\big)\varpi_{j-s\varkappa}(z)dz\Big]g(y)dy.
\end{split}
\Ees
By the support of $\varpi_{j-s\varkappa}$, we get $|z|\lc 2^{j-s\varkappa}$. Utilizing the mean value formula and the Minkowski inequality, we then obtain
  \Bes
  \begin{split}
  \|(I-P_{j-s\varkappa})\Lam_kg\|_{L^1(\R^d)}&\lc2^{j-s\varkappa}\|\nabla[\mathcal{F}^{-1}(\psi_k)]\|_{L^1(\R^d)}\|\varpi_{j-s\varkappa}\|_{L^1(\R^d)}\|g\|_{L^1(\R^d)}\\
  &\lc2^{j-s\varkappa-k}\|g\|_{L^1(\R^d)}.
  \end{split}\Ees
Now combining the above two estimates, we finish the proof.
\end{proof}
For the term $(I-G^s_v)\Lam_k[\epsilon_J(T^v_j b_{Q})\tilde{\chi}_{J^\iota}]$, we have the following two distinct $L^1$ estimates for its high-frequency and low-frequency parts.

\begin{lemma}\label{l:27l^1_1}
For a fixed $j\in\Z$, $Q\in\mathfrak{Q}_{j-s}$, $e^s_v\in\Theta_s$ and $k\in\Z$, there exists $N>0$ such that for any $N_1\in\Z_+$
\Be\label{e:27main}
\begin{split}
\|(I-G^s_v)\Lam_k[&\epsilon_J(T^v_j b_{Q})\tilde{\chi}_{J^\iota}]\|_{L^1(\R^d)}\\
&\lc C_{N_1}2^{-s\ga(d-1)-(j-k)N_1+2s\ga N+s\varkappa N_1+s\gamma N_1+s\eta}\C_\Omega\|b_Q\|_{L^1(\R^d)}.
\end{split}\Ee
\end{lemma}

\begin{proof}
Utilizing the Fubini theorem and $\supp(b_Q)\subseteq Q$, we write the function $(I-G^s_v)\Lam_k[\epsilon_J(T^v_j b_{Q})\tilde{\chi}_{J^\iota}](x)$ as
\begin{equation*}
\begin{split}
\quad(I-G^s_v)\Lam_k[\epsilon_J(T^v_j b_{Q})\tilde{\chi}_{J^\iota}](x)\triangleq\epsilon_J\int_{Q}b_Q(y)D_{j,k}^{s,v}(x,y)dy
\end{split}
\end{equation*}
where $D_{j,k}^{s,v}(x,y)$ is defined as the kernel of $(I-G^s_v)\Lam_k[(T^v_j b_{Q})\tilde{\chi}_{J^\iota}](x)$. More precisely, $D_{j,k}^{s,v}(x,y)$ equals to
$$\fr{1}{(2\pi)^d}\int_{\R^d} e^{ix\cdot\xi}H_{k,s,v}(\xi)\int_{\R^d} e^{-i\xi\cdot\om}\Om(\om-y)\chi_{E_v^s}{\Big(\fr{\om-y}{|\om-y|}\Big)}\fr{\varphi_j(\om-y)}{|\om-y|^d}\tilde{\chi}_{J^\iota}(\om) d\om d\xi
$$
where $H_{k,s,v}(\xi)=(1-\Phi(2^{s\ga}\inn{e^s_v}{\xi/|\xi|}))\psi_{k}(\xi).$
Using the Minkowski inequality, we get
\Bes
\|(I-G^s_v)\Lam_k[\epsilon_J(T^v_j b_{Q})\tilde{\chi}_{J^\iota}]\|_{L^1(\R^d)}
\leq\sup_{y\in Q}\|D_{j,k}^{s,v}(\cdot,y)\|_{L^1(\R^d)}\|b_{Q}\|_{L^1(\R^d)}.
\Ees

Hence in the following we only need to give an $L^1$ estimate of $D_{j,k}^{s,v}(\cdot,y)$ for a fixed $y\in Q$.
In order to separate the rough kernel, we make a change of variables $\om-y=r\theta$.
By the Fubini theorem, the kernel $D_{j,k}^{s,v}(x,y)$ can be written as
\begin{equation}\label{e:27mainintegral}
\fr{1}{(2\pi)^{d}}\int_{E^{s}_v}\Om(\theta)
\bigg[\int_{\R^d}\int_0^\infty
e^{i\inn{x-y-r\theta}{\xi}}H_{k,s,v}(\xi)\fr{{\varphi}_j(r)}{r}\tilde{\chi}_{J^\iota}(y+r\tet) drd\xi\bigg]d\si(\tet).
\end{equation}
Concerning the support of $\varphi_j(r)$, we have $2^{j-4}\leq r\leq2^{j-2}$. Integrating by parts $N_1$ times with $r$, the integral involving $r$ then can
be rewritten as
$$\int_0^\infty e^{-i\inn{r\theta}{\xi}}(i\inn{\theta}{\xi})^{-N_1}
\pari^{N_1}_r\Big[\fr{{\varphi}_j(r)}{r}\tilde{\chi}_{J^\iota}(y+r\tet)\Big]dr.$$

Since $\theta\in E^s_v$, then $|\theta-e^s_v|\leq 2^{-s\ga-2}$. By the support of $\Phi$,
we see $|\inn{e^s_v}{\xi/|\xi|}|\geq 2^{1-s\ga}$. So we obtain
\begin{equation}\label{e:27ang}
|\inn{\theta}{\xi/|\xi|}|\geq|\inn{e^s_v}{\xi/|\xi|}|-|\inn{e^s_v-\theta}{\xi/|\xi|}|\geq2^{-s\ga}.
\end{equation}

Next integrating by parts with $\xi$, the integral in \eqref{e:27mainintegral}
can be rewritten as
\begin{equation}\label{e:27minte}
\begin{split}
\fr{1}{(2\pi)^d}\int_{E^{s}_v}&\Om(\tet)\int_{\R^d}e^{i\inn{x-y-r\tet}{\xi}}\int_0^\infty
\pari_r^{N_1}\Big(\fr{{\varphi}_j(r)}{r}\tilde{\chi}_{J^\iota}(y+r\tet)\Big)\times\\
&\fr{(I-2^{-2k}\Delta_\xi)^N}{(1+2^{-2k}|x-y-r\tet|^2)^N}\Big(H_{k,s,v}(\xi)(i\inn{\tet}{\xi})^{-N_1}\Big)drd\xi d\si(\tet).
\end{split}
\end{equation}

In the following, we give an explicit estimate of the term in (\ref{e:27minte}).
Utilizing the product rule,
\begin{equation}\label{e:27parir}
\begin{split}
\Big|\pari^{N_1}_r\Big(\fr{{\varphi}_j(r)}{r}\tilde{\chi}_{J^\iota}(y+r\tet)\Big)\Big|&=\Big|\sum_{i=0}^{N_1}C_{N_1}^i\pari_r^i[\tilde{\chi}_{J^\iota}(y+r\tet)]\pari_r^{N_1-i}\Big[\fr{{\varphi}_j(r)}{r}\Big]\Big|.\\
\end{split}
\end{equation}
Applying \eqref{e:27parix} and $2^{j-4}\leq r\leq2^{j-2}$ , the above term \eqref{e:27parir} is majorized by
\Be\label{e:27rbound}
\sum\limits_{i=0}^{N_1}C_{N_1}^i2^{-(j-s\varkappa)i}2^{j(-1-N_1+i)}\\
\leq C_{N_1}2^{\varkappa N_1}2^{-(1+N_1)j}.
\Ee

Below we demonstrate the inequality
\begin{equation}\label{e:27D2}
\big|(I-2^{-2k}\Delta_\xi)^{N}[\inn{\theta}{\xi}^{-N_1}H_{k,s,v}(\xi)]\big|\leq C_{N_1}2^{(s\ga+k)N_1+2s\ga N}.
\end{equation}
We begin by proving \eqref{e:27D2} for the case
$N=0$. From \eqref{e:27ang}, it follows that
$$|(-i\inn{\theta}{\xi})^{-N_1}\cdot H_{k,s,v}(\xi)|\lc|\inn{\theta}{\xi}|^{-N_1}\lc C_{N_1}2^{(s\ga+k)N_1}.$$
Utilizing the product rule, we compute
\begin{equation*}
|\pari_{\xi_i}H_{k,n,v}(\xi)|=\big|-\pari_{\xi_i}[\Phi(2^{s\ga}\inn{e^s_v}{\xi/|\xi|})]
\cdot\psi_{k}(\xi)+\pari_{\xi_i}\psi_{k}(\xi)\cdot
(1-\Phi(2^{s\ga}\inn{e^s_v}{\xi/|\xi|}))\big|
\lc2^{s\ga+k}.
\end{equation*}
By induction, it follows that for any multi-indices $\alpha\in\Z^d_+$,
$|\pari_{\xi}^{\alpha}H_{k,s,v}(\xi)|\lc2^{(s\ga+k)|\alpha|}$.
Applying the product rule again together with \eqref{e:27ang}, we derive
\begin{equation*}
\begin{split}
&\quad\Big|\pari^2_{\xi_i}\big[(\inn{\theta}{\xi})^{-N_1}H_{k,s,v}(\xi)\big]\Big|\\
&=\big|{\inn{\theta}{\xi}}^{-N_1-2}\cdot N_1(N_1+1)\theta_i^2\cdot H_{k,s,v}\\
&\quad+2{\inn{\theta}{\xi}}^{-N_1-1}\cdot(-N_1)\cdot\theta_i\pari_{\xi_i}H_{k,s,v}(\xi)
+\inn{\theta}{\xi}^{-N_1}\pari_{\xi_i}^2 H_{k,s,v}(\xi)\big|\\
&\leq C_{N_1}2^{(s\ga+k)(N_1+2)}.\\
\end{split}
\end{equation*}
Hence we conclude that
\begin{equation*}
2^{-2k}\big|\Delta_\xi[(\inn{\theta}{\xi})^{-N_1}H_{k,s,v}(\xi)]\big|\leq C_{N_1}2^{(s\ga+k)N_1+2s\ga}.
\end{equation*}
The general case of \eqref{e:27D2} follows by induction on $N$.

Now we choose $N=[d/2]+1$.
To obtain the $L^1$ estimate of (\ref{e:27mainintegral}), we note that by the support of $H_{k,s,v}$,
$$\int_{\supp(H_{k,s,v})}\int\Big(1+2^{-2k}|x-y-r\theta|^2\Big)^{-N}dxd\xi\lc 1.$$

Integrating in $r$ yields a bound $2^j$. Recalling the assumption that $\|\Om\|_\infty\leq 2^{s\eta}\|\Omega\|_1$. Then integrating in $\theta$, we get a bound $2^{-s\ga(d-1)+s\eta}\C_\Omega$. Combining the estimates \eqref{e:27parir}, \eqref{e:27rbound}, \eqref{e:27D2} with the bounds above, we obtain
\Bes
\begin{split}
\|D_{j,k}^{s,v}(\cdot,y)\|_{L^1(\R^d)}&\lc C_{N_1}2^{s{\varkappa}N_1-j(1+N_1)+(s\ga+k)N_1+2s\ga N+j-s\ga(d-1)+s\eta}\C_\Omega\\
&=C_{N_1}2^{-s\ga(d-1)-(j-k)N_1+2s\ga N+s\varkappa N_1+s\gamma N_1+s\eta}\C_\Omega\\
\end{split}
\Ees
holds for any $y\in Q$. This consequently implies the desired bound for Lemma \ref{l:27l^1_1} with $N=[\fr{d}{2}]+1$.
\end{proof}

\vskip0.24cm

\begin{lemma}\label{l:27l^1_2}
For a fixed $j\in\Z$, $Q\in\mathfrak{Q}_{j-s}$, $e^s_v\in\Theta_s$ and $k\in\Z$, we get
\Bes
\|(I-G^s_v)\Lam_k[\epsilon_J(T^v_j b_{Q})\tilde{\chi}_{J^\iota}]\|_{L^1(\R^d)}\lc2^{-s\ga(d-1)+s\eta}(2^{-(1-\varkappa)s}+2^{j-s-k})\C_\Om\|b_Q\|_{L^1(\R^d)}.
\Ees
\end{lemma}
\begin{proof}
Using $\Lam_k=\Lam_k\tilde{\Lam}_k$, we get
\Bes
\|(I-G^s_v)\Lam_k[\epsilon_J(T^v_j b_{Q})\tilde{\chi}_{J^\iota}]\|_{L^1(\R^d)}\leq \|(I-G^s_v)\tilde{\Lam}_k\|_{L^1(\R^d)\rta L^1(\R^d)}\|\Lam_k[(T^v_j b_{Q})\tilde{\chi}_{J^\iota}]\|_{L^1(\R^d)}.
\Ees
It is straightforward to check that
\Bes
\|(I-G^s_v)\tilde{\Lam}_k\|_{L^1(\R^d)\rta L^1(\R^d)}\lc1
\Ees
holds  uniformly for $s,e^s_v,k$ (see for example \cite[page 100]{See96}).
Thus, to finish the proof of Lemma \ref{l:27l^1_2}, it suffices to show the following estimate
\Be\label{e:27lamkTvj}
\|\Lam_k[(T^v_j b_{Q})\tilde{\chi}_{J^\iota}]\|_{L^1(\R^d)}
\lc2^{-s\ga(d-1)+s\eta}(2^{-(1-\varkappa)s}+2^{j-s-k})\C_\Om\|b_Q\|_{L^1(\R^d)}.
\Ee

Let $y_0$ is the center of $Q$. By the Fubini theorem, the cancellation property of $b_Q$ and $\supp(b_Q)\subseteq Q$ (see (cz-ii)), we can write
\Bes
\Lam_k[(T^v_j b_{Q})\tilde{\chi}_{J^\iota}](x)\triangleq\int_Q A_{j,k}^{s,v}(x,y) b_Q(y)dy=\int_Q [A_{j,k}^{s,v}(x,y)-A_{j,k}^{s,v}(x,y_0)]b_Q(y)dy
\Ees
where the kernel $A_{j,k}^{s,v}(x,y)$ is defined as
$$
\fr{1}{(2\pi)^d}\int_{\R^d} e^{ix\cdot\xi}\psi_k(\xi)\int_{\R^d} e^{-i\xi\cdot\om}\Om(\om-y)\chi_{E_v^s}{\Big(\fr{\om-y}{|\om-y|}\Big)}\fr{\varphi_j(\om-y)}{|\om-y|^d}\tilde{\chi}_{J^\iota}(\om) d\om d\xi.
$$

By making a change of variables to polar coordinates $w-y=r\tet$ and applying the Fubini theorem, we can write $A_{j,k}^{s,v}(x,y)$ as
\begin{equation}\label{e:27kjintegral}
\fr{1}{(2\pi)^{d}}\int_{E^{s}_v}\Om(\theta)
\bigg[\int_{\R^d}\int_0^\infty
e^{i\inn{x-y-r\theta}{\xi}}\psi_{k}(\xi)\fr{{\varphi}_j(r)}{r}\tilde{\chi}_{J^\iota}(y+r\tet) drd\xi\bigg]d\si(\tet).
\end{equation}
Integrating by part $N=[d/2]+1$ times with $\xi$, the above integral then equals to
\begin{equation}\label{e:27kjomintegral}
\begin{split}\fr{1}{(2\pi)^{d}}\int_{E^{s}_v}\Om(\theta)
\bigg[\int_{\R^d}\int_0^\infty
&e^{i\inn{x-y-r\theta}{\xi}}\fr{{\varphi}_j(r)}{r}\tilde{\chi}_{J^\iota}(y+r\tet) \\
&\quad\times\fr{(I-2^{-2k}\Delta_\xi)^{N}\psi_k(\xi)}
{\big(1+2^{-2k}|x-y-r\theta|^2\big)^{N}}drd\xi\bigg]d\si(\tet).
\end{split}
\end{equation}

Next we write
\Bes
A_{j,k}^{s,v}(x,y)-A_{j,k}^{s,v}(x,y_0)\triangleq A_{j,k,1}^{s,v}(x,y)+A_{j,k,2}^{s,v}(x,y)+A_{j,k,3}^{s,v}(x,y)
\Ees
where
\begin{equation*}
\begin{split}
A_{j,k,1}^{s,v}(x,y)=\fr{1}{(2\pi)^{d}}\int_{E^{s}_v}&\Om(\theta)
\bigg[\int_{\R^d}\int_0^\infty
e^{i\inn{x-r\theta}{\xi}}\Big(e^{-i\inn{y}{\xi}}-e^{-i\inn{y_0}{\xi}}\Big)\fr{{\varphi}_j(r)}{r}
\\
&\quad\times\tilde{\chi}_{J^\iota}(y+r\tet) \fr{(I-2^{-2k}\Delta_\xi)^{N}\psi_k(\xi)}
{\big(1+2^{-2k}|x-y-r\theta|^2\big)^{N}}drd\xi\bigg]d\si(\tet),
\end{split}
\end{equation*}
\begin{equation*}
\begin{split}
A_{j,k,2}^{s,v}(x,y)=\fr{1}{(2\pi)^{d}}\int_{E^{s}_v}\Om(\theta)&
\bigg[\int_{\R^d}\int_0^\infty
e^{i\inn{x-y_0-r\theta}{\xi}}
\Big(\tilde{\chi}_{J^\iota}(y+r\tet)-\tilde{\chi}_{J^\iota}(y_0+r\tet)\Big)\\
&\quad\times \fr{{\varphi}_j(r)}{r}\fr{(I-2^{-2k}\Delta_\xi)^{N}\psi_k(\xi)}
{\big(1+2^{-2k}|x-y-r\theta|^2\big)^{N}}drd\xi\bigg]d\si(\tet),
\end{split}
\end{equation*}
and
\begin{equation*}
\begin{split}
A_{j,k,3}^{s,v}(x,y)&=\fr{1}{(2\pi)^{d}}\int_{E^{s}_v}\Om(\theta)
\bigg[\int_{\R^d}\int_0^\infty
e^{i\inn{x-y_0-r\theta}{\xi}}\fr{{\varphi}_j(r)}{r}
\tilde{\chi}_{J^\iota}(y_0+r\tet)\\
&\times \Big(\fr{{(I-2^{-2k}\Delta_\xi)^{N}\psi_k(\xi)}}{\big(1+2^{-2k}|x-y-r\theta|^2\big)^{N}}-\fr{{(I-2^{-2k}\Delta_\xi)^{N}\psi_k(\xi)}}{\big(1+2^{-2k}|x-y_0-r\theta|^2\big)^{N}}\Big)drd\xi\bigg]d\si(\tet).
\end{split}
\end{equation*}
Hence by the Minkowski inequality, we get
\begin{equation}\label{e:27A_m}
\|\Lam_k[(T^v_j b_{Q})\tilde{\chi}_{J^\iota}]\|_{L^1(\R^d)}\leq \sup_{y\in Q}\sum_{\ell=1}^3\|A_{j,k,\ell}^{s,v}(\cdot,y)\|_{L^1(\R^d)}\|b_Q\|_{L^1(\R^d)}.
\end{equation}

\medskip

\textbf{Estimate of $A_{j,k,1}^{s,v}$}.
We employ a method similar to the proof of Lemma \ref{l:27l^1_1} but without applying integrating by parts. Note that $y\in Q$ and $y_0$
is the center of $Q$, then $|y-y_0|\lc 2^{j-s}$. This implies
$$\Big|e^{i\inn{-y}{\xi}}-e^{i\inn{-y_0}{\xi}}\Big|\lc 2^{j-s-k}.$$
Since $2^{j-4}\leq r\leq2^{j-2}$, we have
$|\varphi_j(r)r^{-1}|\lc 2^{-j}.$
Furthermore, we observe that
$$|(I-2^{-2k}\Delta_\xi)^N\psi_k(\xi)|\lc 1.$$
To estimate the $L^1$ estimate of $A_{j,k,1}^{s,v}(\cdot,y)$, we note that by the support of $\psi_k(\xi)$
$$\int_{|\xi|\lc2^{-k}}\int\Big(1+2^{-2k}|x-y-r\theta|^2\Big)^{-N}dxd\xi\lc 1.$$
Integrating in $r$, we get a bound $2^j$. Recall our assumption $\|\Om\|_\infty\leq 2^{s\eta}\|\Om\|_1$, so integrating in $\theta$ gives a bound $2^{-s\ga(d-1)+s\eta}\C_\Omega$.
Combining these bounds, we obtain that
\Be\label{e:27Ajk1}
\|A_{j,k,1}^{s,v}(\cdot,y)\|_{L^1(\R^d)}\lc2^{-s\ga(d-1)+s\eta}2^{j-s-k}\C_\Om.
\Ee

\medskip

\textbf{Estimate of $A_{j,k,2}^{s,v}$}. Utilizing $|y-y_0|\lc2^{j-s}$ and  $\|\nabla\tilde{\chi}_{J^\iota}\|_{L^\infty(\R^d)}\lc2^{-j+s\varkappa}$ (see \eqref{e:27parix}), we get
$$|\tilde{\chi}_{J^\iota}(y+r\tet)-\tilde{\chi}_{J^\iota}(y_0+r\tet)|\leq |y-y_0|\cdot\|\nabla \tilde{\chi}_{J^\iota}\|_{L^\infty(\R^d)}\lc2^{-(1-\varkappa)s}.
$$

Since $2^{j-4}\leq r\leq2^{j-2}$, we obtain
$|\varphi_j(r)r^{-1}|\lc 2^{-j}.$
It is easy to see that
$$|(I-2^{-2k}\Delta_\xi)^N\psi_k(\xi)|\lc 1.$$
Since we need to get the $L^1$ estimate of $A_{j,k,2}^{s,v}(\cdot,y)$, note that by the support of $\psi_k(\xi)$, we have
$$\int_{|\xi|\lc2^{-k}}\int\Big(1+2^{-2k}|x-y-r\theta|^2\Big)^{-N}dxd\xi\lc 1.$$
Integrating in $r$ yields a bound $2^j$. Recall that $\|\Om\|_\infty\leq 2^{s\eta}\|\Om\|_1$, so integrating in $\theta$, we get a bound $2^{-s\ga(d-1)+s\eta}\C_\Omega$.
Combining these bounds, we can get
\Be\label{e:27Ajk2}
\|A_{j,k,2}^{s,v}(\cdot,y)\|_{L^1(\R^d)}\lc2^{-s\ga(d-1)+s\eta}2^{-(1-\varkappa)s}\C_\Om.
\Ee
\medskip

\textbf{Estimate of $A_{j,k,3}^{s,v}$}.
For the term $A_{j,k,3}^{s,v}(\cdot,y)$, we can deal with it in the similar way as $A_{j,k,1}^{s,v}(\cdot,y)$ once we
have the following observation
\begin{equation*}
\begin{split}
\Big|\Psi(y)-\Psi(y_0)\Big|&=\Big|\int_0^1\big\langle y-y_0,
\nabla \Psi(ty+(1-t)y_0)\big\rangle dt
\Big|\\
&\lc|y-y_0|2^{-k}\int_0^1\fr{N2^{-k}|x-(ty+(1-t)y_0)-r\theta|}{(1+2^{-2k}|x-(ty+(1-t)y_0)-r\theta|^2)^{N+1}}dt
\end{split}
\end{equation*}
where $\Psi(y)=(1+2^{-2k}|x-y-r\theta|^2)^{-N}$. Since $y\in Q$ and $y_0$ is the center of $Q$, we get $|y-y_0|\lc 2^{j-s}$. Because $2^{j-4}\leq r\leq2^{j-2}$, it follows that
$|\varphi_j(r)r^{-1}|\lc 2^{-j}.$  Moreover, we have
$$|(I-2^{-2k}\Delta_\xi)^N\psi_k(\xi)|\lc 1.$$
To get the $L^1$ estimate of $A_{j,k,3}^{s,v}(\cdot,y)$, note that by the support of $\psi_k(\xi)$, we obtain
$$\int_{|\xi|\lc2^{-k}}\int\fr{N2^{-k}|x-(ty+(1-t)y_0)-r\theta|}{(1+2^{-2k}|x-(ty+(1-t)y_0)-r\theta|^2)^{N+1}}dxd\xi\lc 1.$$
Integrating in $r$, we get a bound $2^j$. Integrating in $t$ gives a finite bound $1$. Using the assumption $\|\Om\|_\infty\leq 2^{s\eta}\|\Om\|_1$,  integrating in $\theta$ then gives a bound $2^{-s\ga(d-1)+s\eta}\C_\Om$.
Combining these bounds, we obtain
\Be\label{e:27Ajk3}
\|A_{j,k,3}^{s,v}(\cdot,y)\|_{L^1(\R^d)}\lc2^{-s\ga(d-1)+s\eta}2^{j-s-k}\C_\Om.
\Ee

\medskip

Finally we conclude that the required estimate \eqref{e:27lamkTvj} follows from \eqref{e:27A_m}, \eqref{e:27Ajk1}, \eqref{e:27Ajk2} and \eqref{e:27Ajk3}. Hence we complete the proof of Lemma \ref{l:27l^1_2}.
\end{proof}
\vskip0.24cm

\subsection{Proof of Lemma \ref{l:27finall24}}\quad
\vskip0.24cm

Let us come back to the proof of Lemma \ref{l:27finall24}. We first give an $L^1$ estimate with an exponential decay in $s$.  Let $\eps\in(0,1)$ be a constant to be chosen later. By the triangle inequality, \eqref{e:27sumk} and Lemma \ref{l:27IPjslamk}, we derive

\Be\label{e:27sumkfinal}
\begin{split}
&\quad\Big\|\sum_{j\in\Z}\sum_{Q\in\mathfrak{Q}_{j-s}}\sum_v(I-P_{j-s\varkappa})(I-G^s_v)[\epsilon_J(T^v_j b_{Q})\tilde{\chi}_{J^\iota}]\Big\|_{L^1(\R^d)}\\
&\lc\sum_{j\in\Z}\sum_{Q\in\mathfrak{Q}_{j-s}}\sum_v\Big[\sum_{k\leq j-s\eps}+\sum_{k> j-s\eps}2^{j-k-s\varkappa}\Big]\|(I-G^s_v)\Lam_k[\epsilon_J(T^v_j b_{Q})\tilde{\chi}_{J^\iota}]\|_{L^1(\R^d)}.
\end{split}
\Ee
Next utilizing Lemma \ref{l:27l^1_1} with $N=[\fr{d}{2}]+1$ for $\sum_{k\leq j-s\eps}$ and Lemma \ref{l:27l^1_2} for $\sum_{k>j-s\eps}$, together with $\card(\Theta_s)\lc2^{s\ga(d-1)}$ and \eqref{e:27sumbQ},  we get \eqref{e:27sumkfinal} is majorized by

\begin{equation}\label{e:27sumkfinalss}
\begin{split}
\sum_{j\in\Z}\sum_{Q\in\mathfrak{Q}_{j-s}}&(2^{-\varrho_1s}+2^{-\varrho_2s}+2^{-\varrho_3s})\C_\Omega\|b_Q\|_{L^1(\R^d)}\\
&\lc(2^{-\varrho_1s}+2^{-\varrho_2s}+2^{-\varrho_3s})2^{-2n}\C_\Omega\|f\|_{L^1(\R^d)}
\end{split}
\end{equation}
where
\begin{equation*}
\begin{split}
\varrho_1&=\eps N_1-\Big(2\ga([\fr{d}{2}]+1)+\varkappa N_1+\ga N_1+\eta\Big),\
\varrho_2=1-(\eps+\eta)
\end{split}
\end{equation*}
and
$$\varrho_3=1-(2\eps+\eta-\varkappa).$$

On the other hand, by Lemma \ref{l:27L^33} and \eqref{e:27sumbQ}, we get
\Be\label{e:27sumkfinal2}
\begin{split}
    \Big\|\sum_{j\in\Z}\sum_{Q\in\mathfrak{Q}_{j-s}}\sum_v&(I-P_{j-s\varkappa})(I-G^s_v)[\epsilon_J(T^v_j b_{Q})\tilde{\chi}_{J^\iota}]
\Big\|_{L^3(\mathbb{R}^d)}\\
&\lc2^{s[\ga([\fr{d}{2}]+1)+\fr23\gamma(d-1)+\eta]}\Big(\lambda^2\C_\Omega2^{-2n}\|f\|_{L^1(\R^d)}\Big)^{1/3}.
\end{split}
\Ee

Making an interpolation between \eqref{e:27sumkfinal} with \eqref{e:27sumkfinalss} and \eqref{e:27sumkfinal2}, we derive
\Bes
\begin{split}
    \Big\|\sum_{j\in\Z}\sum_{Q\in\mathfrak{Q}_{j-s}}\sum_v&(I-P_{j-s\varkappa})(I-G^s_v)[\epsilon_J(T^v_j b_{Q})\tilde{\chi}_{J^\iota}]
\Big\|_{L^2(\mathbb{R}^d)}\\
&\lc\Big(2^{-s\vartheta_1}+2^{-s\vartheta_2}+2^{-s\vartheta_3}\Big)\Big(\lam \C_\Om2^{-2n}{\|f\|_{L^1(\R^d)}}\Big)^{1/2}
\end{split}\Ees
where
$$\vartheta_1=\fr14\Big[\eps N_1-\Big(2\ga([\fr{d}{2}]+1)+\varkappa N_1+\ga N_1+\eta\Big)\Big]-\fr34\Big[\ga([\fr{d}{2}]+1)+\fr23\gamma(d-1)+\eta\Big],$$
$$\vartheta_2=\fr14\Big[1-(\eps+\eta)\Big]-\fr34\Big[\ga([\fr{d}{2}]+1)+\fr23\gamma(d-1)+\eta\Big],$$
and
$$\vartheta_3=\fr14\Big[1-(2\eps+\eta-\varkappa)\Big]-\fr34\Big[\ga([\fr{d}{2}]+1)+\fr23\gamma(d-1)+\eta\Big].
$$

We now select parameters satisfying $0<\eta\ll\ga\ll\varkappa\ll\eps\ll1$ and choose an integer $N_1>0$ sufficiently large to ensure $\vartheta_1>0$, $\vartheta_2>0$ and $\vartheta_3>0$.
It should be pointed out that these parameters $\eta, \gamma, \varkappa, \varepsilon$ are chosen consistently with our earlier arguments.
Consequently by choosing the constant $\del_4$ such that
$$\del_4=\min\big\{2\vartheta_1,2\vartheta_2,2\vartheta_3\big\},$$
we complete the proof of Lemma \ref{l:27finall24}.
$\hfill{} \Box$

\vskip0.24cm





\vskip0.24cm
\bibliographystyle{amsplain}
\bibliography{rf}

\end{document}